\providecommand{\U}[1]{\protect \rule{.1in}{.1in}}
\newtheorem{theorem}{Theorem}
\theoremstyle{plain}
\newtheorem{corollary}{Corollary}
\newtheorem{definition}{Definition}
\newtheorem{lemma}{Lemma}
\newtheorem{remark}{Remark}
\numberwithin{equation}{section}
\begin{document}
\title[Adams-Spanne type estimates for parabolic sublinear operators]{Adams-Spanne type estimates for parabolic sublinear operators and their
commutators by with rough kernels on parabolic generalized Morrey spaces }
\author{FER\.{I}T G\"{U}RB\"{U}Z}
\address{ANKARA UNIVERSITY, FACULTY OF SCIENCE, DEPARTMENT OF MATHEMATICS, TANDO\u{G}AN
06100, ANKARA, TURKEY }
\curraddr{}
\email{feritgurbuz84@hotmail.com}
\urladdr{}
\thanks{}
\thanks{}
\thanks{}
\date{}
\subjclass[2010]{ 42B20, 42B25, 42B35}
\keywords{parabolic sublinear operator; parabolic fractional integral operator{;
parabolic fractional maximal operator; rough kernel; parabolic generalized
Morrey space; parabolic }$BMO${\ space; commutator}}
\dedicatory{ }
\begin{abstract}
The aim of this paper is to give Adams-Spanne type estimates for parabolic
sublinear operators and their commutators by with rough kernels generated by
parabolic fractional integral operators under generic size conditions which
are satisfied by most of the operators in harmonic analysis. Their endpoint
estimates are also disposed.

\end{abstract}
\maketitle

\section{Introduction and main results}

Let $S^{n-1}=\left \{  x\in{\mathbb{R}^{n}:}\text{ }|x|=1\right \}  $ denote the
unit sphere on ${\mathbb{R}^{n}}$ $(n\geq2)$ equipped with the normalized
Lebesgue measure $d\sigma \left(  x^{\prime}\right)  $, where $x^{\prime}$
denotes the unit vector in the direction of $x$ and $\alpha_{n}\geq
\alpha_{n-1}\geq \cdots \geq \alpha_{1}\geq1$ be fixed real numbers.

Note that for each fixed $x=\left(  x_{1},\ldots,x_{n}\right)  \in
{\mathbb{R}^{n}}$, the function%
\[
F\left(  x,\rho \right)  =%
%TCIMACRO{\dsum \limits_{i=1}^{n}}%
%BeginExpansion
{\displaystyle \sum \limits_{i=1}^{n}}
%EndExpansion
\frac{x_{i}^{2}}{\rho^{2\alpha_{i}}}%
\]
is a strictly decreasing function of $\rho>0$. Hence, there exists a unique
$\rho=\rho \left(  x\right)  $ such that $F\left(  x,\rho \right)  =1$. It is
clear that for each fixed $x\in{\mathbb{R}^{n}}$, the function $F\left(
x,\rho \right)  $ is a decreasing function in $\rho>0$. Fabes and Rivi\'{e}re
\cite{Fabes and Riviere} showed that $\left(  {\mathbb{R}^{n},}\rho \right)  $
is a metric space which is often called the mixed homogeneity space related to
$\left \{  \alpha_{i}\right \}  _{i=1}^{n}$. For $t>0$, we let $A_{t}$ be the
diagonal $n\times n$ matrix%
\[
A_{t}=diag\left[  t^{\alpha_{1}},\ldots,t^{\alpha_{n}}\right]  =%
\begin{pmatrix}
t^{\alpha_{1}} &  & 0\\
& \ddots & \\
0 &  & t^{\alpha_{n}}%
\end{pmatrix}
.
\]

Let $\rho \in \left(  0,\infty \right)  $ and $0\leq \varphi_{n-1}\leq2\pi$,
$0\leq \varphi_{i}\leq \pi$, $i=1,\ldots,n-2$. For any $x=\left(  x_{1}%
,x_{2},\ldots,x_{n}\right)  \in{\mathbb{R}^{n}}$, set%
\begin{align*}
x_{1}  &  =\rho^{\alpha_{1}}\cos \varphi_{1}\ldots \cos \varphi_{n-2}\cos
\varphi_{n-1},\\
x_{2}  &  =\rho^{\alpha_{2}}\cos \varphi_{1}\ldots \cos \varphi_{n-2}\sin
\varphi_{n-1},\\
&  \vdots \\
x_{n-1}  &  =\rho^{\alpha_{n-1}}\cos \varphi_{1}\sin \varphi_{2},\\
x_{n}  &  =\rho^{\alpha_{n}}\sin \varphi_{1}.
\end{align*}
Thus $dx=\rho^{\alpha-1}J\left(  x^{\prime}\right)  d\rho d\sigma(x^{\prime}%
)$, where $\alpha=%
%TCIMACRO{\dsum \limits_{i=1}^{n}}%
%BeginExpansion
{\displaystyle \sum \limits_{i=1}^{n}}
%EndExpansion
\alpha_{i}$, $x^{\prime}\in S^{n-1}$, $J\left(  x^{\prime}\right)  =%
%TCIMACRO{\dsum \limits_{i=1}^{n}}%
%BeginExpansion
{\displaystyle \sum \limits_{i=1}^{n}}
%EndExpansion
\alpha_{i}\left(  x_{i}^{\prime}\right)  ^{2}$, $d\sigma$ is the element of
area of $S^{n-1}$ and $\rho^{\alpha-1}J\left(  x^{\prime}\right)  $ is the
Jacobian of the above transform. Obviously, $J\left(  x^{\prime}\right)  \in$
$C^{\infty}\left(  S^{n-1}\right)  $ function and that there exists $M>0$ such
that $1\leq J\left(  x^{\prime}\right)  \leq M$ and $x^{\prime}\in S^{n-1}$.

Let $P$ be a real $n\times n$ matrix, whose all the eigenvalues have positive
real part. Let $A_{t}=t^{P}$ $\left(  t>0\right)  $, and set $\gamma=trP$.
Then, there exists a quasi-distance $\rho$ associated with $P$ such that (see
\cite{Coifman-Weiss})

$\left(  1-1\right)  $ $\rho \left(  A_{t}x\right)  =t\rho \left(  x\right)  $,
$t>0$, for every $x\in{\mathbb{R}^{n}}$,

$\left(  1-2\right)  $ $\rho \left(  0\right)  =0$, $\rho \left(  x-y\right)
=\rho \left(  y-x\right)  \geq0$, and $\rho \left(  x-y\right)  \leq k\left(
\rho \left(  x-z\right)  +\rho \left(  y-z\right)  \right)  $,

$\left(  1-3\right)  $ $dx=\rho^{\gamma-1}d\sigma \left(  w\right)  d\rho$,
where $\rho=\rho \left(  x\right)  $, $w=A_{\rho^{-1}}x$ and $d\sigma \left(
w\right)  $ is a measure on the unit ellipsoid $\left \{  w:\rho \left(
w\right)  =1\right \}  $.

Then, $\left \{  {\mathbb{R}^{n},\rho,dx}\right \}  $ becomes a space of
homogeneous type in the sense of Coifman-Weiss (see \cite{Coifman-Weiss}) and
a homogeneous group in the sense of Folland-Stein (see \cite{Folland-Stein}).

Denote by $E\left(  x,r\right)  $ the ellipsoid with center at $x$ and radius
$r$, more precisely, $E\left(  x,r\right)  =\left \{  y\in{\mathbb{R}^{n}:\rho
}\left(  x-y\right)  <r\right \}  $. For $k>0$, we denote $kE\left(
x,r\right)  =\left \{  y\in{\mathbb{R}^{n}:\rho}\left(  x-y\right)
<kr\right \}  $. Moreover, by the property of $\rho$ and the polar coordinates
transform above, we have%
\[
\left \vert E\left(  x,r\right)  \right \vert =%
%TCIMACRO{\dint \limits_{{\rho}\left(  x-y\right)  <r}}%
%BeginExpansion
{\displaystyle \int \limits_{{\rho}\left(  x-y\right)  <r}}
%EndExpansion
dy=\upsilon_{\rho}r^{\alpha_{1}+\cdots+\alpha_{n}}=\upsilon_{\rho}r^{\gamma},
\]
where $|E(x,r)|$ stands for the Lebesgue measure of $E(x,r)$ and
$\upsilon_{\rho}$ is the volume of the unit ellipsoid on ${\mathbb{R}^{n}}$.
By $E^{C}(x,r)={\mathbb{R}^{n}}\setminus$ $E\left(  x,r\right)  $, we denote
the complement of $E\left(  x,r\right)  $. If we take $\alpha_{1}%
=\cdots=\alpha_{n}=1$ and $P=I$, then obviously $\rho \left(  x\right)
=\left \vert x\right \vert =\left(
%TCIMACRO{\dsum \limits_{i=1}^{n}}%
%BeginExpansion
{\displaystyle \sum \limits_{i=1}^{n}}
%EndExpansion
x_{i}^{2}\right)  ^{\frac{1}{2}} $, $\gamma=n$, $\left(  {\mathbb{R}^{n},\rho
}\right)  =$ $\left(  {\mathbb{R}^{n},}\left \vert \cdot \right \vert \right)  $,
$E_{I}(x,r)=B\left(  x,r\right)  $, $A_{t}=tI$ and $J\left(  x^{\prime
}\right)  \equiv1$. Moreover, in the standard parabolic case $P_{0}%
=diag\left[  1,\ldots,1,2\right]  $ we have%
\[
\rho \left(  x\right)  =\sqrt{\frac{\left \vert x^{\prime}\right \vert ^{2}%
+\sqrt{\left \vert x^{\prime}\right \vert ^{4}+x_{n}^{2}}}{2},}\qquad x=\left(
x^{\prime},x_{n}\right)  .
\]

Note that we deal not exactly with the parabolic metric, but with a general
anisotropic metric $\rho$ of generalized homogeneity, the parabolic metric
being its particular case, but we keep the term parabolic in the title and
text of the paper, the above existing tradition, see for instance
\cite{Calderon and Torchinsky}.

Suppose that $\Omega \left(  x\right)  $ is a real-valued and measurable
function defined on ${\mathbb{R}^{n}}$. Suppose that $S^{n-1}$ is the unit
sphere on ${\mathbb{R}^{n}}$ $(n\geq2)$ equipped with the normalized Lebesgue
surface measure $d\sigma$. Let $\Omega \in L_{s}(S^{n-1})$ with $1<s\leq \infty$
be homogeneous of degree zero with respect to $A_{t}$ ($\Omega \left(
x\right)  $ is $A_{t}$-homogeneous of degree zero), that is, $\Omega
(A_{t}x)=\Omega(x),~$for any$~t>0,$ $x\in{\mathbb{R}^{n}}$. We define
$s^{\prime}=\frac{s}{s-1}$ for any $s>1$.

Note that we deal not exactly with the parabolic metric, but with a general
anisotropic metric $\rho$ of generalized homogeneity, the parabolic metric
being its particular case, but we keep the term parabolic in the title and
text of the paper, the above existing tradition, see for instance
\cite{Calderon and Torchinsky}.

In 1938, Morrey considered regularity of the solution of elliptic partial
differential equations(PDEs) in terms of the solutions themselves and their
derivatives. This is a very famous work(=Morrey spaces $M_{p,\lambda}\left(
{\mathbb{R}^{n}}\right)  $) by Morrey \cite{Morrey}. We define parabolic
Morrey spaces $M_{p,\lambda,P}\left(  {\mathbb{R}^{n}}\right)  $ via the
following norm. Let $f\in L_{p}({\mathbb{R}^{n}})$, $0\leq \lambda<\gamma$ and
$1<p<\infty$. Then define%
\[
\left \Vert f\right \Vert _{M_{p,\lambda,P}\left(  {\mathbb{R}^{n}}\right)
}=\sup \limits_{x\in{\mathbb{R}^{n}}}\sup_{r>0}r^{-\frac{\lambda}{p}}\, \Vert
f\Vert_{L_{p}(E(x,r))}\equiv \sup_{E}r^{-\frac{\lambda}{p}}\, \Vert
f\Vert_{L_{p}(E(x,r))},
\]
where $E=E(x,r)$ stands for any ellipsoid with center at $x$ and radius $r$.
When $\lambda=0$, $M_{p,\lambda,P}\left(  {\mathbb{R}^{n}}\right)  $ coincides
with the parabolic Lebesgue space $L_{p,P}\left(  {\mathbb{R}^{n}}\right)  $.
If $P=I$, then $M_{p,\lambda,I}({\mathbb{R}^{n}})\equiv M_{p,\lambda
}({\mathbb{R}^{n}})$ and $L_{p,I}\left(  {\mathbb{R}^{n}}\right)  \equiv
L_{p}\left(  {\mathbb{R}^{n}}\right)  $ are the classical Morrey and the
Lebesgue spaces, respectively. Later many people studied parabolic Morrey
spaces from a various point of view. For example, G\"{u}rb\"{u}z
\cite{Gurbuz3} has been a criterion of the boundedness of anisotropic maximal
functions on weighted anisotropic Morrey Spaces.

Let $f\in L_{1}^{loc}\left(  {\mathbb{R}^{n}}\right)  $. The parabolic
fractional maximal operator $M_{\alpha}^{P}$ and the parabolic fractional
integral operator $I_{\alpha}^{P}$ (also known as the parabolic Riesz
potential) are defined respectively by%
\[
M_{\alpha}^{P}f(x)=\sup_{t>0}|E(x,t)|^{-1+\frac{\alpha}{\gamma}}%
\int \limits_{E(x,t)}|f(y)|dy,\qquad0\leq \alpha<\gamma
\]%
\[
I_{\alpha}^{P}f\left(  x\right)  =\int \limits_{{\mathbb{R}^{n}}}\frac{f\left(
y\right)  }{{\rho}\left(  x-y\right)  ^{\gamma-\alpha}}dy,\qquad
0<\alpha<\gamma.
\]

Now, we list the results of the Adams type and Spanne type boundedness of the
parabolic fractional integral operator and also give the relation between the
Adams inequality and the Spanne inequality on parabolic Morrey spaces.

Spanne considered the boundedness of the parabolic fractional integral
operator on parabolic Morrey spaces. The following Theorem \ref{teo1} is in
\cite{Peetre}.

\begin{theorem}
\label{teo1}Let $0<\alpha<\gamma$, $1<p<\frac{\gamma}{\alpha}$, $0<\lambda
<\gamma-\alpha p$. Moreover, let $\frac{1}{p}-\frac{1}{q}=\frac{\alpha}%
{\gamma}$ and $\frac{\lambda}{p}=\frac{\mu}{q}$. Then we have%
\[
\left \Vert I_{\alpha}^{P}f\right \Vert _{M_{q,\mu,P}}\leq C\left \Vert
f\right \Vert _{M_{p,\lambda,P}}.
\]

\end{theorem}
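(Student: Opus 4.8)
The plan is to prove the Spanne-type estimate for the parabolic Riesz potential $I_\alpha^P$ by the standard Hedberg-type decomposition, dividing the integral defining $I_\alpha^P f(x)$ into a ``near'' part and a ``far'' part relative to an ellipsoid $E(x,r)$, optimizing over $r$, and then translating the resulting pointwise bound into a Morrey-norm estimate. The key structural facts I will exploit are $|E(x,r)| = \upsilon_\rho r^\gamma$, the quasi-triangle inequality $(1\text{-}2)$ for $\rho$, and the polar decomposition $(1\text{-}3)$, all recorded in the excerpt.

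**First I would** fix $x$ and $r>0$ and split
\[
I_\alpha^P f(x) = \int_{E(x,r)} \frac{f(y)}{\rho(x-y)^{\gamma-\alpha}}\,dy + \int_{E^C(x,r)} \frac{f(y)}{\rho(x-y)^{\gamma-\alpha}}\,dy =: I_1 + I_2.
\]
For $I_1$, I decompose the ellipsoid into dyadic annuli $E(x,2^{-j}r)\setminus E(x,2^{-j-1}r)$ and bound $\rho(x-y)^{-(\gamma-\alpha)}$ below by $(2^{-j-1}r)^{-(\gamma-\alpha)}$ on each shell; summing the resulting geometric series in $j$ and using the Morrey estimate $\|f\|_{L_1(E(x,t))}\le C\,t^{\gamma-\gamma/p+\lambda/p}\|f\|_{M_{p,\lambda,P}}$ (from Hölder and the definition of the Morrey norm), this yields
\[
|I_1| \le C\, r^{\alpha - \gamma/p + \lambda/p}\,\|f\|_{M_{p,\lambda,P}}.
\]
For $I_2$, I apply Hölder in $L_p$--$L_{p'}$ on the complement, using the polar coordinates $(1\text{-}3)$ to compute $\int_{E^C(x,r)} \rho(x-y)^{-(\gamma-\alpha)p'}\,dy$; here the hypothesis $1<p<\gamma/\alpha$ guarantees $(\gamma-\alpha)p'>\gamma$ so the integral converges, and it evaluates to a constant times $r^{(\alpha-\gamma/p)}$, giving the same power of $r$ as $I_1$ after inserting the Morrey bound for $\|f\|_{L_p(E^C)}$. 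Choosing the usual optimizing radius (balancing the $r$-powers against $M_\alpha^P f$ or directly integrating) produces the pointwise Hedberg inequality
\[
|I_\alpha^P f(x)| \le C\, r^{\alpha - \gamma/p + \lambda/p}\,\|f\|_{M_{p,\lambda,P}} + (\text{local mass term}),
\]
which after optimizing in $r$ bounds $|I_\alpha^P f(x)|$ by a power of $M f(x)$ times $\|f\|_{M_{p,\lambda,P}}$.

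**The main obstacle** will be verifying that the exponent arithmetic closes correctly so that the final Morrey norm on the $q,\mu$ side matches: one must check that with $\frac1p-\frac1q=\frac{\alpha}{\gamma}$ and $\frac{\lambda}{p}=\frac{\mu}{q}$, the $L_q$-average of $|I_\alpha^P f|$ over $E(x,R)$ indeed scales like $R^{\mu/q}\|f\|_{M_{p,\lambda,P}}$. Concretely, after the pointwise estimate I would raise to the $q$-th power, integrate over $E(x,R)$, and use boundedness of the parabolic maximal function $M$ on $L_p$ (the anisotropic Hardy--Littlewood maximal theorem, available in this homogeneous-space setting) to control the $Mf$ factor; the constraint $0<\lambda<\gamma-\alpha p$ is exactly what keeps all intermediate exponents in their admissible ranges and makes the far-part integral converge. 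The remaining steps are bookkeeping: tracking the constants $\upsilon_\rho$, $k$, and $M$ from the structure of $\rho$, none of which affect the final inequality beyond the implicit constant $C$.
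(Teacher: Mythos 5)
Your decomposition attaches the two available estimates to the wrong halves of the integral, and both of your stated bounds are false as written. For the near part $I_{1}$: after your dyadic decomposition the $j$-th term is of size $(2^{-j}r)^{\alpha-\frac{\gamma-\lambda}{p}}\|f\|_{M_{p,\lambda,P}}$, and since the hypothesis $\lambda<\gamma-\alpha p$ forces $\alpha-\frac{\gamma-\lambda}{p}<0$, this ``geometric series'' \emph{diverges}; there is nothing to sum. Indeed the claimed inequality $|I_{1}|\leq Cr^{\alpha-\frac{\gamma}{p}+\frac{\lambda}{p}}\|f\|_{M_{p,\lambda,P}}$ cannot hold: take $f=\chi_{E(x,\varepsilon)}$ with $\varepsilon\ll r$, for which $|I_{1}|\approx\varepsilon^{\alpha}$ while $\|f\|_{M_{p,\lambda,P}}\approx\varepsilon^{\frac{\gamma-\lambda}{p}}$, and since $\frac{\gamma-\lambda}{p}>\alpha$ the right-hand side vanishes faster than the left as $\varepsilon\to0$. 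The near part can only be controlled by the maximal function, $|I_{1}|\lesssim r^{\alpha}M^{P}f(x)$ with $M^{P}:=M_{0}^{P}$ (this is the whole point of Hedberg's trick). Symmetrically, the far part cannot be handled by one global H\"{o}lder inequality, because $\|f\|_{L_{p}(E^{C}(x,r))}$ is \emph{not} controlled by the Morrey norm: the function $f(y)=\rho(y)^{-\frac{\gamma-\lambda}{p}}$ belongs to $M_{p,\lambda,P}$, yet $\int_{E^{C}(0,r)}\rho(y)^{-(\gamma-\lambda)}dy\approx\int_{r}^{\infty}t^{\lambda-1}dt=\infty$ because $\lambda>0$. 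The far part is precisely where the Morrey norm must enter, through dyadic annuli or the identity $\rho(x-y)^{\alpha-\gamma}\approx\int_{\rho(x-y)}^{\infty}t^{\alpha-\gamma-1}dt$ (the mechanism of inequality (\ref{34}) in the proof of Theorem \ref{teo6}), which gives $|I_{2}|\lesssim\int_{r}^{\infty}t^{\alpha-\frac{\gamma}{p}-1}\|f\|_{L_{p}(E(x,t))}dt\lesssim r^{\alpha-\frac{\gamma-\lambda}{p}}\|f\|_{M_{p,\lambda,P}}$; it is here, and only here, that $\lambda<\gamma-\alpha p$ is used for convergence.

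Even with both halves repaired, your optimization does not land on the stated theorem: balancing $r^{\alpha}M^{P}f(x)$ against $r^{\alpha-\frac{\gamma-\lambda}{p}}\|f\|_{M_{p,\lambda,P}}$ yields $|I_{\alpha}^{P}f(x)|\lesssim\left(M^{P}f(x)\right)^{\frac{p}{q_{2}}}\|f\|_{M_{p,\lambda,P}}^{1-\frac{p}{q_{2}}}$ with the \emph{Adams} exponent $\frac{1}{q_{2}}=\frac{1}{p}-\frac{\alpha}{\gamma-\lambda}$, not the Spanne exponent $q$ of the statement. To finish you then need (i) boundedness of the parabolic maximal operator on the Morrey space $M_{p,\lambda,P}$ --- the $L_{p}({\mathbb{R}^{n}})$ boundedness you invoke is not enough, since $f\in M_{p,\lambda,P}$ need not lie in $L_{p}({\mathbb{R}^{n}})$ --- and (ii) the H\"{o}lder embedding $M_{q_{2},\lambda,P}\subset M_{q,\mu,P}$, which is exactly the Remark following Theorem \ref{teo2}; so your route in effect proves Theorem \ref{teo2} first and deduces Theorem \ref{teo1} from it. Note finally that the paper gives no proof of Theorem \ref{teo1} at all (it is quoted from the reference \cite{Peetre}); the standard direct proof is shorter than your route: write $f=f\chi_{2kE}+f\chi_{(2kE)^{C}}$, estimate the local term by the $L_{p}\to L_{q}$ boundedness of $I_{\alpha}^{P}$, so that $\|I_{\alpha}^{P}(f\chi_{2kE})\|_{L_{q}(E)}\lesssim r^{\frac{\lambda}{p}}\|f\|_{M_{p,\lambda,P}}=r^{\frac{\mu}{q}}\|f\|_{M_{p,\lambda,P}}$, and estimate the far term as above, noting that $r^{\frac{\gamma}{q}}\cdot r^{\alpha-\frac{\gamma-\lambda}{p}}=r^{\frac{\mu}{q}}$ by the two exponent relations $\frac{1}{p}-\frac{1}{q}=\frac{\alpha}{\gamma}$ and $\frac{\lambda}{p}=\frac{\mu}{q}$.
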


Later, Adams \cite{Adams} proved the following Theorem \ref{teo2}.

\begin{theorem}
\label{teo2}Let $0<\alpha<\gamma$, $1<p<\frac{\gamma-\lambda}{\alpha}$,
$0<\lambda<\gamma-\alpha p$ and $\frac{1}{p}-\frac{1}{q}=\frac{\alpha}%
{\gamma-\lambda}$. Then we have%
\[
\left \Vert I_{\alpha}^{P}f\right \Vert _{M_{q,\mu,P}}\leq C\left \Vert
f\right \Vert _{M_{p,\lambda,P}}.
\]

\end{theorem}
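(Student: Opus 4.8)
The plan is to prove the Adams-type estimate (Theorem \ref{teo2}) by combining a pointwise estimate for $I_\alpha^P f$ with the boundedness of the parabolic fractional maximal operator $M_\beta^P$, following the classical Hedberg trick adapted to the anisotropic metric $\rho$. The key is to interpolate between the local part of the integral (controlled by a maximal function) and the tail part (controlled by the Morrey norm). Since all the geometric machinery — the polar-coordinate decomposition $dx=\rho^{\gamma-1}d\sigma(w)d\rho$, the measure formula $|E(x,r)|=\upsilon_\rho r^\gamma$, and the quasi-triangle inequality (1-2) — is already established in the excerpt, I can work directly with ellipsoids $E(x,r)$ in place of Euclidean balls.

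First I would split the defining integral for $I_\alpha^P f(x)$ at an ellipsoid of radius $r$ to be chosen later:
\begin{equation*}
I_\alpha^P f(x)=\int_{E(x,r)}\frac{f(y)}{\rho(x-y)^{\gamma-\alpha}}\,dy+\int_{E^C(x,r)}\frac{f(y)}{\rho(x-y)^{\gamma-\alpha}}\,dy=:A(x,r)+B(x,r).
\end{equation*}
For the near part $A(x,r)$, I would decompose $E(x,r)$ into dyadic annuli $E(x,2^{-j}r)\setminus E(x,2^{-j-1}r)$, on each of which $\rho(x-y)^{\gamma-\alpha}\approx (2^{-j}r)^{\gamma-\alpha}$, and bound the resulting sum by the average of $|f|$ over each annulus. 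Summing the geometric series in $j$ yields the pointwise bound $|A(x,r)|\le C\,r^\alpha M^P f(x)$, where $M^P=M_0^P$ is the parabolic Hardy-Littlewood maximal operator. For the far part $B(x,r)$, I would again decompose into dyadic annuli $E(x,2^{j+1}r)\setminus E(x,2^j r)$ for $j\ge 0$, apply Hölder's inequality on each annulus using $\Vert f\Vert_{L_p(E(x,2^{j+1}r))}\le C(2^{j+1}r)^{\lambda/p}\Vert f\Vert_{M_{p,\lambda,P}}$, and sum the resulting series. The series converges precisely because the exponent $\alpha-\frac{\gamma-\lambda}{p}$ is negative under the hypothesis $p<\frac{\gamma-\lambda}{\alpha}$, giving $|B(x,r)|\le C\,r^{\alpha-\frac{\gamma-\lambda}{p}}\Vert f\Vert_{M_{p,\lambda,P}}$.

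With both pieces in hand, the pointwise estimate reads $|I_\alpha^P f(x)|\le C\bigl(r^\alpha M^P f(x)+r^{\alpha-\frac{\gamma-\lambda}{p}}\Vert f\Vert_{M_{p,\lambda,P}}\bigr)$. I would then optimize over $r>0$ by choosing $r$ so that the two terms balance, namely $r=\bigl(\Vert f\Vert_{M_{p,\lambda,P}}/M^P f(x)\bigr)^{p/(\gamma-\lambda)}$, which converts the estimate into the Hedberg-type inequality
\begin{equation*}
|I_\alpha^P f(x)|\le C\,\bigl(M^P f(x)\bigr)^{1-\frac{\alpha p}{\gamma-\lambda}}\Vert f\Vert_{M_{p,\lambda,P}}^{\frac{\alpha p}{\gamma-\lambda}}=C\,\bigl(M^P f(x)\bigr)^{p/q}\Vert f\Vert_{M_{p,\lambda,P}}^{1-p/q},
\end{equation*}
where the relation $\frac{1}{p}-\frac{1}{q}=\frac{\alpha}{\gamma-\lambda}$ gives exactly $1-\frac{\alpha p}{\gamma-\lambda}=\frac{p}{q}$.

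Finally I would raise this to the $q$-th power, integrate over an arbitrary ellipsoid $E(x,r)$, and take the supremum defining the Morrey norm. The $L_p\to L_p$ boundedness of the parabolic maximal operator $M^P$ on the Morrey space $M_{p,\lambda,P}$ (which follows from its weak-type $(1,1)$ and strong $(p,p)$ bounds together with the standard Morrey estimate for $M^P$) then yields
\begin{equation*}
\Vert I_\alpha^P f\Vert_{M_{q,\mu,P}}\le C\,\Vert M^P f\Vert_{M_{p,\lambda,P}}^{p/q}\,\Vert f\Vert_{M_{p,\lambda,P}}^{1-p/q}\le C\,\Vert f\Vert_{M_{p,\lambda,P}},
\end{equation*}
with the Morrey exponents matching because $\mu/q=\lambda/p$ is forced by the scaling in the optimization. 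The main obstacle I anticipate is the careful bookkeeping of the two dyadic decompositions and verifying that the convergence condition for the tail sum coincides exactly with the hypothesis $1<p<\frac{\gamma-\lambda}{\alpha}$; the anisotropic quasi-triangle inequality (with its constant $k$) must be tracked so that the comparability $\rho(x-y)\approx 2^j r$ on each annulus is genuinely uniform, but since $\rho$ is a genuine quasi-distance this is routine once the constants are absorbed into $C$.
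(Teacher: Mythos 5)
Your strategy is essentially the paper's own: the paper never proves Theorem \ref{teo2} directly, but obtains it as the special case $\Omega\equiv1$, $\varphi(x,t)=t^{\lambda-\gamma}$ of Theorem \ref{teo6}, and the proof of Theorem \ref{teo6} is precisely your Hedberg argument --- split at radius $r$, bound the near part by $r^{\alpha}M_{\Omega}^{P}f(x)$ (the paper takes this as hypothesis (\ref{33}); your dyadic-annuli verification of it for $I_{\alpha}^{P}$ is the standard one), bound the tail by $r^{\alpha-\frac{\gamma-\lambda}{p}}\Vert f\Vert_{M_{p,\lambda,P}}$ (the paper uses Fubini plus condition (\ref{32*}) where you use annuli; both work, and your convergence requirement is exactly $p<\frac{\gamma-\lambda}{\alpha}$), optimize in $r$, and close with the Morrey boundedness of the maximal operator.

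However, your final sentence contains a genuine error in the exponent bookkeeping. The relation $\frac{\mu}{q}=\frac{\lambda}{p}$ is the \emph{Spanne} scaling, which belongs with the exponent $\frac{1}{q}=\frac{1}{p}-\frac{\alpha}{\gamma}$ of Theorem \ref{teo1}; it is not what your optimization delivers, and combined with the Adams exponent $\frac{1}{p}-\frac{1}{q}=\frac{\alpha}{\gamma-\lambda}$ it would assert a false statement. Concretely, after raising the Hedberg inequality to the $q$-th power and integrating over $E(x_{0},r)$ you get
\[
r^{-\mu/q}\Vert I_{\alpha}^{P}f\Vert_{L_{q}(E(x_{0},r))}\lesssim\Vert f\Vert_{M_{p,\lambda,P}}^{1-p/q}\left(r^{-\mu/p}\Vert M^{P}f\Vert_{L_{p}(E(x_{0},r))}\right)^{p/q},
\]
and to dominate the bracketed quantity by $\Vert M^{P}f\Vert_{M_{p,\lambda,P}}$ uniformly in $r$ you are forced to take $\mu=\lambda$: if $\mu=\lambda q/p>\lambda$, a stray factor $r^{(\lambda-\mu)/q}$ survives and blows up as $r\to0$. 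Equivalently, a dilation argument ($f\mapsto f\circ A_{t}$, using $I_{\alpha}^{P}(f\circ A_{t})=t^{-\alpha}(I_{\alpha}^{P}f)\circ A_{t}$) shows that boundedness $I_{\alpha}^{P}:M_{p,\lambda,P}\to M_{q,\mu,P}$ with the Adams $q$ requires $-\alpha+\frac{\mu-\gamma}{q}=\frac{\lambda-\gamma}{p}$, which under $\frac{1}{p}-\frac{1}{q}=\frac{\alpha}{\gamma-\lambda}$ forces $\mu=\lambda$ (your relation would instead force $\lambda=0$). So your argument is correct and complete as machinery, but what it proves is boundedness into $M_{q,\lambda,P}$ --- the actual Adams conclusion, and the way the paper's Remark and Corollary read Theorem \ref{teo2} --- not into $M_{q,\lambda q/p,P}$.
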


\begin{remark}
The indices $q_{1}$, $q_{2}$ and $\mu$ satisfy the following relations:%
\[
\frac{1}{q_{1}}=\frac{1}{p}-\frac{\alpha}{\gamma},\frac{1}{q_{2}}=\frac{1}%
{p}-\frac{\alpha}{\gamma-\lambda},\frac{\mu}{q_{1}}=\frac{\lambda}{p}.
\]
Since $q_{1}<q_{2}$, by H\"{o}lder's inequality we get%
\[
\left \Vert I_{\alpha}^{P}f\right \Vert _{M_{q_{1},\mu,P}}\leq \left \Vert
I_{\alpha}^{P}f\right \Vert _{M_{q_{2},\lambda,P}}.
\]

Thus, Theorem \ref{teo2} is a sharper result than Theorem \ref{teo1}, in other
words, Theorem \ref{teo2} improves Theorem \ref{teo1} when $1<p<\frac
{\gamma-\lambda}{\alpha}$:%
\[
\left \Vert I_{\alpha}^{P}f\right \Vert _{M_{q_{1},\mu,P}}\leq \left \Vert
I_{\alpha}^{P}f\right \Vert _{M_{q_{2},\lambda,P}}\leq C\left \Vert f\right \Vert
_{M_{p,\lambda,P}}.
\]

\end{remark}

Recall that, for $0<\alpha<\gamma$,%

\[
M_{\alpha}^{P}f\left(  x\right)  \leq \nu_{n}^{\frac{\alpha}{\gamma}%
-1}I_{\alpha}^{P}\left(  \left \vert f\right \vert \right)  \left(  x\right)
\]
holds (see \cite{Li-Yang}, Remark 2.1). Hence Theorems \ref{teo1} and
\ref{teo2} also imply boundedness of the parabolic fractional maximal operator
$M_{\alpha}^{P}$, where $\upsilon_{n}$ is the volume of the unit ellipsoid on
${\mathbb{R}^{n}}$.

Now, provided that $0<\alpha<\gamma$ and $f\in L_{1}^{loc}\left(
{\mathbb{R}^{n}}\right)  $, we recall the definitions of the parabolic
fractional integral operator with rough kernel $I_{\Omega,\alpha}^{P}$ and a
related parabolic fractional maximal operator with rough kernel $M_{\Omega
,\alpha}^{P}$ as follows:%

\[
M_{\Omega,\alpha}^{P}f(x)=\sup_{t>0}|E(x,t)|^{-1+\frac{\alpha}{\gamma}}%
\int \limits_{E(x,t)}\left \vert \Omega \left(  x-y\right)  \right \vert |f(y)|dy
\]
and
\[
I_{\Omega,\alpha}^{P}f(x)=\int \limits_{{\mathbb{R}^{n}}}\frac{\Omega
(x-y)}{{\rho}\left(  x-y\right)  ^{\gamma-\alpha}}f(y)dy,
\]
where $\Omega \in L_{s}(S^{n-1})$ with $1<s\leq \infty$ be homogeneous of degree
zero with respect to $A_{t}$.

If $\alpha=0$, then $M_{\Omega,0}^{P}\equiv M_{\Omega}^{P}$ is the parabolic
maximal operator with rough kernel and we also get the parabolic
Calder\'{o}n--Zygmund singular integral operator with rough kernel $T_{\Omega
}^{P}=I_{\Omega,0}^{P}$. It is obvious that when $\Omega \equiv1$,
$M_{1,\alpha}^{P}\equiv M_{\alpha}^{P}$ and $I_{1,\alpha}^{P}\equiv I_{\alpha
}^{P}$ are the parabolic fractional maximal operator and the parabolic
fractional integral operator, respectively. If $P=I$, then $M_{\Omega,\alpha
}^{I}\equiv M_{\Omega,\alpha}$ is the fractional maximal operator with rough
kernel, and $M_{\Omega,0}^{I}\equiv M$ is the Hardy-Littlewood maximal
operator with rough kernel.

On the other hand, in 1965, Calder\'{o}n \cite{Calderon1} introduced the
commutator $\left[  A,B\right]  $ on $%
%TCIMACRO{\U{211d} }%
%BeginExpansion
\mathbb{R}
%EndExpansion
$ which is defined by
\[
\left[  A,B\right]  f\left(  x\right)  =A\left(  x\right)  Bf\left(  x\right)
-B\left(  Af\right)  \left(  x\right)  ,
\]
where $A\in Lip\left(
%TCIMACRO{\U{211d} }%
%BeginExpansion
\mathbb{R}
%EndExpansion
\right)  $ and the operator $B:=\frac{d}{dx}\circ H$, $H$ denotes the Hilbert
transform defined by
\[
Hf\left(  x\right)  =p.v.\frac{1}{\pi}%
%TCIMACRO{\dint \limits_{-\infty}^{\infty}}%
%BeginExpansion
{\displaystyle \int \limits_{-\infty}^{\infty}}
%EndExpansion
\frac{f\left(  y\right)  }{x-y}dy.
\]
Note that the commutator $\left[  A,B\right]  $ can be rewritten as $\left[
A,\sqrt{-\Delta}\right]  $, where $\Delta=\frac{d^{2}}{dx^{2}}$ is the
Laplacian operator on $%
%TCIMACRO{\U{211d} }%
%BeginExpansion
\mathbb{R}
%EndExpansion
$. Thus, the study of the commutator $\left[  A,B\right]  $ plays an important
role in some characterizations of function spaces and so on (see
\cite{Gurbuz2, Shi} for example).

Let $b$ be a locally integrable function on ${\mathbb{R}^{n}}$, then for
$0<\alpha<\gamma$, we define commutators generated by parabolic fractional
maximal and integral operators with rough kernels and $b$ as follows,
respectively.%
\[
M_{\Omega,b,\alpha}^{P}\left(  f\right)  (x)=\sup_{t>0}|E(x,t)|^{-1+\frac
{\alpha}{\gamma}}\int \limits_{E(x,t)}\left \vert b\left(  x\right)  -b\left(
y\right)  \right \vert \left \vert \Omega \left(  x-y\right)  \right \vert
|f(y)|dy,
\]%
\[
\lbrack b,I_{\Omega,\alpha}^{P}]f(x)\equiv b(x)I_{\Omega,\alpha}%
^{P}f(x)-I_{\Omega,\alpha}^{P}(bf)(x)=\int \limits_{{\mathbb{R}^{n}}%
}[b(x)-b(y)]\frac{\Omega(x-y)}{{\rho}\left(  x-y\right)  ^{\gamma-\alpha}%
}f(y)dy.
\]

Now, we introduce the parabolic bounded mean oscillation space $BMO_{P}%
({\mathbb{R}^{n}})$ following the known ideas of defining bounded mean
oscillation space $BMO({\mathbb{R}^{n}})$ (see \cite{Gurbuz1}) as follows:

\begin{definition}
For each $b\in L_{1}^{loc}({\mathbb{R}^{n}})$ we set%
\[
b_{E(x,r)}=\frac{1}{|E(x,r)|}%
%TCIMACRO{\dint \limits_{E(x,r)}}%
%BeginExpansion
{\displaystyle \int \limits_{E(x,r)}}
%EndExpansion
b(y)dy.
\]
For every $r>0$, we define
\[
\Vert b\Vert_{\ast}=\sup_{x\in{\mathbb{R}^{n}},\text{ }r>0}\frac{1}{|E(x,r)|}%
%TCIMACRO{\dint \limits_{E(x,r)}}%
%BeginExpansion
{\displaystyle \int \limits_{E(x,r)}}
%EndExpansion
|b(y)-b_{E(x,r)}|dy<\infty,
\]
and we say that $b\in BMO_{P}({\mathbb{R}^{n}})$ if $\Vert b\Vert_{\ast
}<\infty$. We also define
\[
BMO_{P}({\mathbb{R}^{n}})=\{b\in L_{1}^{loc}({\mathbb{R}^{n}}):\Vert
b\Vert_{\ast}<\infty \}.
\]

If one regards two functions whose difference is a constant as one, then the
space $BMO_{P}({\mathbb{R}^{n}})$ is a Banach space with respect to norm
$\Vert \cdot \Vert_{\ast}$.
\end{definition}

In 1961 John and Nirenberg \cite{John-Nirenberg} established the following
deep property of functions from $BMO_{P}$.

\begin{theorem}
\label{Theorem}\cite{John-Nirenberg} (John-Nirenberg inequality) If $b\in
BMO_{P}({\mathbb{R}^{n}})$ and $E(x,r)$ is an ellipsoid, then%
\[
\left \vert \left \{  x\in E\left(  x,r\right)  \,:\,|b(x)-b_{E\left(
x,r\right)  }|>\xi \right \}  \right \vert \leq|E\left(  x,r\right)  |\exp \left(
-\frac{\xi}{C\Vert b\Vert_{\ast}}\right)  ,~~~\xi>0,
\]
where $C$ depends only on the dimension $\gamma$.
\end{theorem}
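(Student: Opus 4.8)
The plan is to adapt the classical Calder\'{o}n--Zygmund stopping-time argument to the space of homogeneous type $\left\{\mathbb{R}^n,\rho,dx\right\}$, exploiting the exact doubling relation $|E(x,2r)|=2^\gamma|E(x,r)|$ that follows from $|E(x,r)|=\upsilon_\rho r^\gamma$. By homogeneity of the statement in $b$ I first normalize $\|b\|_\ast=1$; the general case follows by replacing $\xi$ with $\xi/\|b\|_\ast$. I introduce the quantity
\[
\Phi(\xi)=\sup_{E}\frac{1}{|E|}\left|\left\{y\in E:\,|b(y)-b_E|>\xi\right\}\right|,
\]
where the supremum runs over all ellipsoids $E=E(x_0,r)$, and reduce the theorem to proving $\Phi(\xi)\leq e^{-\xi/C}$ for all $\xi>0$, with $C$ depending only on $\gamma$ and the quasi-triangle constant $k$.

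First I would fix an ellipsoid $E_0=E(x_0,r)$ and run a Calder\'{o}n--Zygmund decomposition of $b-b_{E_0}$ on $E_0$ at a height $\lambda>1$ to be chosen, using the dyadic-cube structure available on any space of homogeneous type. This yields a countable family of pairwise disjoint sub-ellipsoid-type sets $\{E_j\}\subset E_0$ with
\[
\lambda<\frac{1}{|E_j|}\int_{E_j}|b(y)-b_{E_0}|\,dy\leq C_0\lambda,\qquad |b-b_{E_0}|\leq\lambda\ \text{a.e. on }E_0\setminus\textstyle\bigcup_j E_j,
\]
the right-hand bound coming from the doubling of the parent set. Two consequences are immediate: summing the left inequality gives $\sum_j|E_j|\leq\frac{1}{\lambda}\int_{E_0}|b-b_{E_0}|\,dy\leq\frac{|E_0|}{\lambda}$ since $\|b\|_\ast=1$, and the upper average bound forces $|b_{E_j}-b_{E_0}|\leq C_0\lambda$ for every $j$.

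Next I would set up the self-improving recursion. For $\xi>C_0\lambda$, any point $y$ with $|b(y)-b_{E_0}|>\xi$ lies in some $E_j$ (outside the union the oscillation is at most $\lambda$), and there $|b(y)-b_{E_j}|\geq|b(y)-b_{E_0}|-|b_{E_j}-b_{E_0}|>\xi-C_0\lambda$. Hence
\[
\left|\left\{y\in E_0:\,|b-b_{E_0}|>\xi\right\}\right|\leq\sum_j\left|\left\{y\in E_j:\,|b-b_{E_j}|>\xi-C_0\lambda\right\}\right|\leq\Phi(\xi-C_0\lambda)\sum_j|E_j|.
\]
Dividing by $|E_0|$, using $\sum_j|E_j|\leq|E_0|/\lambda$, and taking the supremum over $E_0$ gives $\Phi(\xi)\leq\lambda^{-1}\Phi(\xi-C_0\lambda)$ for $\xi>C_0\lambda$. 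Fixing $\lambda>1$ and iterating this inequality, together with the trivial bound $\Phi\leq1$, produces the geometric decay $\Phi(\xi)\leq\lambda^{-N}$ whenever $\xi>NC_0\lambda$; writing $\lambda^{-N}=\exp(-N\log\lambda)$ and absorbing constants into $C=C_0\lambda/\log\lambda$ yields $\Phi(\xi)\leq\exp(-\xi/C)$, which is the claim.

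I expect the main obstacle to be the construction of the Calder\'{o}n--Zygmund decomposition in the anisotropic setting: the ellipsoids $E(x,r)$ are not nested in a dyadic fashion the way Euclidean cubes are, so one must invoke the dyadic-cube structure on the homogeneous space $\left\{\mathbb{R}^n,\rho,dx\right\}$ and verify that the stopping averages inherit the controlled upper bound $C_0\lambda$ from the doubling constant $2^\gamma$. Once the decomposition and its two defining properties are secured, the recursion and the passage to exponential decay are routine and depend on the geometry only through doubling.
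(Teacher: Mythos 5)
The paper never proves this theorem: it is quoted directly from John--Nirenberg \cite{John-Nirenberg} and used as a black box (only its consequences (\ref{5.1}) and (\ref{5.2}) are invoked later). So the real comparison is between your sketch and the classical stopping-time proof, and your outline is indeed that proof: normalize $\Vert b\Vert_{\ast}=1$, decompose at height $\lambda$, obtain $\sum_{j}|E_{j}|\leq|E_{0}|/\lambda$ and $|b_{E_{j}}-b_{E_{0}}|\leq C_{0}\lambda$, and iterate $\Phi(\xi)\leq\lambda^{-1}\Phi(\xi-C_{0}\lambda)$. This skeleton is correct, and it does adapt to $\left({\mathbb{R}^{n}},\rho,dx\right)$, since $|E(x,r)|=\upsilon_{\rho}r^{\gamma}$ gives exact doubling and the space is of homogeneous type.

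There are, however, two genuine gaps. First, the recursion as written is circular: $\Phi$ is defined as a supremum over \emph{ellipsoids}, but the Calder\'{o}n--Zygmund pieces $E_{j}$ produced by a dyadic structure of Christ type on this quasi-metric space are not ellipsoids, so the inequality $\left|\left\{y\in E_{j}:|b-b_{E_{j}}|>\xi-C_{0}\lambda\right\}\right|\leq\Phi(\xi-C_{0}\lambda)\,|E_{j}|$ does not follow from the definition of $\Phi$. You name this as ``the main obstacle'' but then treat it as routine; it is the actual crux. The standard fix must be carried out explicitly: define $\Phi$ and the oscillation seminorm over the dyadic lattice, run the entire recursion inside that lattice, and only at the end transfer to ellipsoids by sandwiching each $E(x,r)$ between dyadic cubes of comparable measure and using $|b_{E}-b_{Q}|\lesssim\Vert b\Vert_{\ast}$; this transfer is what produces the constants. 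Second, the final absorption step is not valid: from $\Phi(\xi)\leq\lambda^{-N}$ for $\xi>NC_{0}\lambda$ you only obtain $\Phi(\xi)\leq\lambda\,e^{-\xi/C}$, and the factor $\lambda>1$ cannot be absorbed into the exponential. Indeed, for small $\xi$ one can have $\Phi(\xi)=1$ (take $b=\mathrm{sign}(x_{1})$: on any ellipsoid centered on the hyperplane $x_{1}=0$ one has $b_{E}=0$ and $|b-b_{E}|=1$ a.e., so the level set fills $E$ for all $\xi<1$), whereas $e^{-\xi/C}<1$ for every $\xi>0$ and every $C$. What the argument genuinely yields is the standard John--Nirenberg form
\[
\left|\left\{y\in E:\,|b(y)-b_{E}|>\xi\right\}\right|\leq C_{1}\,|E|\exp\left(-\frac{\xi}{C\Vert b\Vert_{\ast}}\right),\qquad\xi>0,
\]
with a structural constant $C_{1}>1$; the statement as printed in the paper, with $C_{1}=1$, is an over-simplified quotation of \cite{John-Nirenberg} that no proof can deliver, so you should not aim for it. A last, minor point: your constants will depend not only on $\gamma$ but also on the quasi-triangle constant $k$ of $\rho$ (hence on $P$), and this should be said.
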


Theorem \ref{Theorem} implies that following results:

\begin{corollary}
\cite{John-Nirenberg} Let $b\in BMO_{P}({\mathbb{R}^{n}})$. Then, for any
$p>1$,%
\begin{equation}
\Vert b\Vert_{\ast}\thickapprox \sup_{x\in{\mathbb{R}^{n}},\text{ }r>0}\left(
\frac{1}{|E(x,r)|}%
%TCIMACRO{\dint \limits_{E(x,r)}}%
%BeginExpansion
{\displaystyle \int \limits_{E(x,r)}}
%EndExpansion
|b(y)-b_{E(x,r)}|^{p}dy\right)  ^{\frac{1}{p}}\label{5.1}%
\end{equation}
is valid.
\end{corollary}

\begin{corollary}
\label{Corollary} Let $b\in BMO_{P}({\mathbb{R}^{n}})$. Then there is a
constant $C>0$ such that
\begin{equation}
\left \vert b_{E(x,r)}-b_{E(x,t)}\right \vert \leq C\Vert b\Vert_{\ast}\left(
1+\ln \frac{t}{r}\right)  \text{ }~\text{for}~0<2r<t,\label{5.2}%
\end{equation}
and for any $q>1$, it is easy to see that%
\[
\left \Vert b-\left(  b\right)  _{E}\right \Vert _{L_{q}\left(  E\right)  }\leq
Cr^{\frac{\gamma}{q}}\Vert b\Vert_{\ast}\left(  1+\ln \frac{t}{r}\right)  .
\]
where $C$ is independent of $b$, $x$, $r$ and $t$.
\end{corollary}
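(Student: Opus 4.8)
The plan is to prove the two assertions in turn, relying only on the exact volume formula $\left \vert E(x,r)\right \vert =\upsilon_{\rho}r^{\gamma}$ (so that the doubling ratio $\left \vert E(x,t)\right \vert /\left \vert E(x,r)\right \vert =(t/r)^{\gamma}$ is a pure power) and on the John--Nirenberg equivalence \eqref{5.1} from the preceding corollary.

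For the logarithmic estimate \eqref{5.2} I would first establish the one-step bound $\left \vert b_{E(x,r)}-b_{E(x,2r)}\right \vert \leq 2^{\gamma}\Vert b\Vert_{\ast}$. Since $E(x,r)\subset E(x,2r)$, this follows from
\[
\left \vert b_{E(x,r)}-b_{E(x,2r)}\right \vert \leq \frac{1}{\left \vert E(x,r)\right \vert }\int \limits_{E(x,2r)}\left \vert b(y)-b_{E(x,2r)}\right \vert dy=\frac{\left \vert E(x,2r)\right \vert }{\left \vert E(x,r)\right \vert }\,\frac{1}{\left \vert E(x,2r)\right \vert }\int \limits_{E(x,2r)}\left \vert b(y)-b_{E(x,2r)}\right \vert dy,
\]
in which the doubling ratio equals exactly $2^{\gamma}$ and the remaining average is bounded by $\Vert b\Vert_{\ast}$. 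Next, choosing the integer $k$ with $2^{k}r\leq t<2^{k+1}r$ (so $k\leq \log_{2}(t/r)$), I would telescope along the chain $E(x,r)\subset E(x,2r)\subset \cdots \subset E(x,2^{k}r)$ and then compare $E(x,2^{k}r)$ with $E(x,t)$ directly, their volume ratio being below $2^{\gamma}$. Summing the $k+1$ resulting terms, each controlled by $2^{\gamma}\Vert b\Vert_{\ast}$, gives $\left \vert b_{E(x,r)}-b_{E(x,t)}\right \vert \leq 2^{\gamma}(k+1)\Vert b\Vert_{\ast}\leq C\left(1+\ln \tfrac{t}{r}\right)\Vert b\Vert_{\ast}$, which is \eqref{5.2}.

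For the $L_{q}$ estimate I would read $\left(b\right)_{E}$ as $b_{E(x,t)}$ and the norm as taken over $E(x,r)$ with $0<2r<t$ (the factor $r^{\gamma/q}\sim \left \vert E(x,r)\right \vert^{1/q}$ on the right forces the domain to be $E(x,r)$, while the logarithmic factor forces the subtracted average to sit at the larger scale $t$). The triangle inequality then splits the norm as
\[
\left \Vert b-b_{E(x,t)}\right \Vert _{L_{q}(E(x,r))}\leq \left \Vert b-b_{E(x,r)}\right \Vert _{L_{q}(E(x,r))}+\left \vert b_{E(x,r)}-b_{E(x,t)}\right \vert \left \vert E(x,r)\right \vert ^{1/q}.
\]
The first summand is at most $Cr^{\gamma/q}\Vert b\Vert_{\ast}$ by \eqref{5.1} with exponent $q$, while the second is bounded by \eqref{5.2} times $\upsilon_{\rho}^{1/q}r^{\gamma/q}$. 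Adding the two yields the claimed $Cr^{\gamma/q}\Vert b\Vert_{\ast}\left(1+\ln \tfrac{t}{r}\right)$.

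The only point requiring genuine care is the dyadic bookkeeping in \eqref{5.2}: one must treat the leftover inclusion $E(x,2^{k}r)\subset E(x,t)$ as a separate final step rather than assuming $t$ is an exact dyadic multiple of $r$. Everything else is a routine consequence of the constant doubling ratio $2^{\gamma}$ and the John--Nirenberg inequality, so I do not anticipate any substantive obstacle.
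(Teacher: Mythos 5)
Your proof is correct. Note that the paper gives no proof of this corollary at all: it is stated as a known consequence of the John--Nirenberg inequality (Theorem \ref{Theorem}), with a citation to \cite{John-Nirenberg}, so your argument --- the one-step bound with the exact doubling ratio $2^{\gamma}$ coming from $\left\vert E(x,r)\right\vert =\upsilon_{\rho}r^{\gamma}$, dyadic telescoping with the leftover inclusion $E(x,2^{k}r)\subset E(x,t)$ handled as a separate step, and then the triangle inequality together with (\ref{5.1}) and (\ref{5.2}) for the $L_{q}$ bound --- is exactly the standard argument that the citation points to, and it fills the gap correctly. Your reading of the ambiguously written second inequality (norm over $E(x,r)$, mean at the larger scale $t$) is also the right one: it is the only interpretation consistent with the factor $r^{\frac{\gamma}{q}}$ on the right-hand side, and it matches how the estimate is actually invoked in the proof of Lemma \ref{Lemma 1}.
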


On the other hand, a recent trend in the theory of parabolic Morrey spaces is
to generalize the parameter $\lambda$ to a function so that they can include
the endpoint case and some generalized integral operators. In this sense, the
definition of parabolic generalized Morrey spaces is given as follows.

\begin{definition}
\label{Definition1}\cite{Gurbuz2} \textbf{(parabolic generalized Morrey space)
}Let $\varphi(x,r)$ be a positive measurable function on ${\mathbb{R}^{n}%
}\times(0,\infty)$ and $1\leq p<\infty$. Then, the parabolic generalized
Morrey space $M_{p,\varphi,P}\equiv M_{p,\varphi,P}({\mathbb{R}^{n}})$ is
defined by
\[
M_{p,\varphi,P}\equiv M_{p,\varphi,P}({\mathbb{R}^{n}})=\left \{
\begin{array}
[c]{c}%
f\in L_{p}^{loc}({\mathbb{R}^{n}}):\\
\Vert f\Vert_{M_{p,\varphi,P}}=\sup \limits_{x\in{\mathbb{R}^{n}},\text{ }%
r>0}\varphi(x,r)^{-1}\,|E(x,r)|^{-\frac{1}{p}}\, \Vert f\Vert_{L_{p}%
(E(x,r))}<\infty
\end{array}
\right \}  .
\]
Furthermore, the weak parabolic generalized Morrey space $WM_{p,\varphi
,P}\equiv WM_{p,\varphi,P}({\mathbb{R}^{n}})$ is defined by%
\[
WM_{p,\varphi,P}\equiv WM_{p,\varphi,P}({\mathbb{R}^{n}})=\left \{
\begin{array}
[c]{c}%
f\in WL_{p}^{loc}({\mathbb{R}^{n}}):\\
\Vert f\Vert_{WM_{p,\varphi,P}}=\sup \limits_{x\in{\mathbb{R}^{n}},\text{ }%
r>0}\varphi(x,r)^{-1}\,|E(x,r)|^{-\frac{1}{p}}\, \Vert f\Vert_{WL_{p}%
(E(x,r))}<\infty
\end{array}
\right \}  .
\]

\end{definition}

According to this definition, we recover the parabolic Morrey space
$M_{p,\lambda,P}$ and the weak parabolic Morrey space $WM_{p,\lambda,P}$ under
the choice $\varphi(x,r)=r^{\frac{\lambda-\gamma}{p}}$:
\[
M_{p,\lambda,P}=M_{p,\varphi,P}\mid_{\varphi(x,r)=r^{\frac{\lambda-\gamma}{p}%
}},~~~~~~~~WM_{p,\lambda,P}=WM_{p,\varphi,P}\mid_{\varphi(x,r)=r^{\frac
{\lambda-\gamma}{p}}}.
\]

We now make some conventions. Throughout this paper, we use the symbol
$F\lesssim G$ to denote that there exists a positive consant $C$ which is
independent of the essential variables and not necessarily the same one in
each occurrence such that $F\leq CG$. If $F\lesssim G$ and$\ G\lesssim F$ we
then write$\ F\approx G$ and say that $F$ and $G$ are equivalent. For a fixed
$p\in \left[  1,\infty \right)  $, $p^{\prime}$ denotes the dual or conjugate
exponent of $p$, namely, $p^{\prime}=\frac{p}{p-1}$ and we use the convention
$1^{\prime}=\infty$ and $\infty^{\prime}=1$. Moreover, $\left \Vert
\Omega \right \Vert _{L_{s}\left(  S^{n-1}\right)  }:=\left(  \int
\limits_{S^{n-1}}\left \vert \Omega \left(  z^{\prime}\right)  \right \vert
^{s}d\sigma \left(  z^{\prime}\right)  \right)  ^{\frac{1}{s}}$.

G\"{u}rb\"{u}z \cite{Gurbuz2} proved Spanne type inequalities for parabolic
sublinear operators with a rough kernel generated by parabolic fractional
integrals and their commutators on parabolic generalized Morrey spaces under
generic size conditions which are satisfied by most of the operators in
harmonic analysis. His results can be summarized as follows:

\begin{theorem}
\cite{Gurbuz2}\textbf{(Spanne type result)} Let $\Omega \in L_{s}(S^{n-1})$,
$1<s\leq \infty$, be $A_{t}$-homogeneous of degree zero. Let $0<\alpha<\gamma$,
$1\leq p<\frac{\gamma}{\alpha}$, $\frac{1}{q}=\frac{1}{p}-\frac{\alpha}%
{\gamma}$. Let $T_{\Omega,\alpha}^{P}$ be a parabolic sublinear operator
satisfying condition (\ref{e1}) in Theorem \ref{teo6} below, bounded from
$L_{p}({\mathbb{R}^{n}})$ to $L_{q}({\mathbb{R}^{n}})$ for $p>1$, and bounded
from $L_{1}({\mathbb{R}^{n}})$ to $WL_{q}({\mathbb{R}^{n}})$ for $p=1$. Let
also, for $s^{\prime}\leq p$, $p\neq1$, the pair $(\varphi_{1},\varphi_{2})$
satisfies the condition%
\[
\int \limits_{r}^{\infty}\frac{\operatorname*{essinf}\limits_{t<\tau<\infty
}\varphi_{1}(x,\tau)\tau^{\frac{\gamma}{p}}}{t^{\frac{\gamma}{q}+1}}dt\leq C\,
\varphi_{2}(x,r),
\]
and for $q<s$ the pair $(\varphi_{1},\varphi_{2})$ satisfies the condition%
\[
\int \limits_{r}^{\infty}\frac{\operatorname*{essinf}\limits_{t<\tau<\infty
}\varphi_{1}(x,\tau)\tau^{\frac{\gamma}{p}}}{t^{\frac{\gamma}{q}-\frac{\gamma
}{s}+1}}dt\leq C\, \varphi_{2}(x,r)r^{\frac{\gamma}{s}},
\]
where $C$ does not depend on $x$ and $r$.

Then the operator $T_{\Omega,\alpha}^{P}$ is bounded from $M_{p,\varphi_{1}%
,P}$ to $M_{q,\varphi_{2},P}$ for $p>1$ and from $M_{1,\varphi_{1},P}$ to
$WM_{q,\varphi_{2},P}$ for $p=1$. Moreover, we have for $p>1$%
\[
\left \Vert T_{\Omega,\alpha}^{P}f\right \Vert _{M_{q,\varphi_{2},P}}%
\lesssim \left \Vert f\right \Vert _{M_{p,\varphi_{1},P}},
\]
and for $p=1$%
\[
\left \Vert T_{\Omega,\alpha}^{P}f\right \Vert _{WM_{q,\varphi_{2},P}}%
\lesssim \left \Vert f\right \Vert _{M_{1,\varphi_{1},P}}.
\]

\end{theorem}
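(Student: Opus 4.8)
The plan is to follow the by-now standard Guliyev-type reduction: prove a pointwise-in-$(x_0,r)$ local estimate on a single ellipsoid $E_0=E(x_0,r)$ and then recover the Morrey bound by inserting it into the definition of $\|\cdot\|_{M_{q,\varphi_2,P}}$ and invoking the two integral conditions on $(\varphi_1,\varphi_2)$. The target local estimate is, for $s'\le p$,
\[
\|T_{\Omega,\alpha}^P f\|_{L_q(E_0)}\lesssim r^{\gamma/q}\int_{2r}^\infty \|f\|_{L_p(E(x_0,t))}\,\frac{dt}{t^{\gamma/q+1}},
\]
and, for $q<s$,
\[
\|T_{\Omega,\alpha}^P f\|_{L_q(E_0)}\lesssim r^{\gamma/q-\gamma/s}\int_{2r}^\infty \|f\|_{L_p(E(x_0,t))}\,\frac{dt}{t^{\gamma/q-\gamma/s+1}},
\]
the two exponents being exactly those appearing in the two hypotheses on the pair. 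In the weak case $p=1$ one proves the same bounds with $\|T_{\Omega,\alpha}^P f\|_{L_q(E_0)}$ replaced by $\|T_{\Omega,\alpha}^P f\|_{WL_q(E_0)}$.

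To prove the local estimate I would fix $E_0$ and split $f=f_1+f_2$ with $f_1=f\chi_{2E_0}$ and $f_2=f\chi_{(2E_0)^c}$, so that $T_{\Omega,\alpha}^P f=T_{\Omega,\alpha}^P f_1+T_{\Omega,\alpha}^P f_2$. For the near part $f_1$ I would use the assumed global boundedness of $T_{\Omega,\alpha}^P$ from $L_p$ to $L_q$ (or $L_1\to WL_q$ when $p=1$), giving $\|T_{\Omega,\alpha}^P f_1\|_{L_q(E_0)}\lesssim\|f\|_{L_p(2E_0)}$; since $\|f\|_{L_p(E(x_0,t))}$ is nondecreasing in $t$ and $\int_{2r}^\infty t^{-\gamma/q-1}\,dt\approx r^{-\gamma/q}$, this is dominated by the right-hand side of the desired estimate. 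For the far part $f_2$ I would invoke the size condition (\ref{e1}), which for $x\in E_0$ (so that $x$ lies outside the support of $f_2$) yields $|T_{\Omega,\alpha}^P f_2(x)|\lesssim\int_{(2E_0)^c}\frac{|\Omega(x-y)|}{\rho(x-y)^{\gamma-\alpha}}|f(y)|\,dy$. Using $\rho(x-y)\approx\rho(x_0-y)$ for $x\in E_0$, $y\in(2E_0)^c$ (from the quasi-triangle inequality $(1\text{-}2)$), together with the Fubini identity $\rho(x_0-y)^{-(\gamma-\alpha)}\approx\int_{\rho(x_0-y)}^\infty t^{\alpha-\gamma-1}\,dt$, reduces matters to estimating $\int_{E(x_0,t)}|\Omega(x-y)||f(y)|\,dy$.

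This inner integral is the crux. When $s'\le p$ one applies Hölder with exponents $(p,p')$; since $p'\le s$, the angular factor $\|\Omega(x-\cdot)\|_{L_{p'}(E(x_0,t))}\lesssim\|\Omega\|_{L_s(S^{n-1})}\,t^{\gamma/p'}$ is controlled by the polar-coordinate formula $(1\text{-}3)$ and the $A_t$-homogeneity of $\Omega$, and the arithmetic $\gamma/p'+\alpha-\gamma-1=-\gamma/q-1$ produces exactly the kernel $t^{-\gamma/q-1}$; taking $\|\cdot\|_{L_q(E_0)}$ then contributes the factor $|E_0|^{1/q}\approx r^{\gamma/q}$. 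The main obstacle is the complementary range $q<s$, where $p'>s$ and the crude Hölder split no longer lets one absorb $\Omega$ into $\|\Omega\|_{L_s(S^{n-1})}$. Here I would take the $L_q(E_0,dx)$ norm first, before integrating in $y$, use Minkowski's integral inequality to move the norm inside, and estimate $\|\Omega(\cdot-y)\|_{L_q(E_0)}$ by Hölder with exponent $s/q>1$ over $E_0$ combined with a polar-coordinate bound for $\int_{E_0}|\Omega(x-y)|^s\,dx$; this is precisely what manufactures the extra factor $r^{\gamma/q-\gamma/s}$ and the shifted kernel $t^{\gamma/s-\gamma/q-1}$.

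Once both local estimates are in hand, the proof concludes uniformly. I would bound $\|f\|_{L_p(E(x_0,t))}\le\|f\|_{M_{p,\varphi_1,P}}\,\varphi_1(x_0,t)\,|E(x_0,t)|^{1/p}$ for every $t$ and, exploiting the monotonicity of the left-hand side in $t$, upgrade this to $\|f\|_{L_p(E(x_0,t))}\lesssim\|f\|_{M_{p,\varphi_1,P}}\operatorname*{essinf}\limits_{t<\tau<\infty}\varphi_1(x_0,\tau)\,\tau^{\gamma/p}$. Multiplying the local estimate by $\varphi_2(x_0,r)^{-1}|E_0|^{-1/q}$ and applying the relevant integral condition makes the $t$-integral collapse to $C\,\varphi_2(x_0,r)$, so the supremum over $(x_0,r)$ is bounded by $\|f\|_{M_{p,\varphi_1,P}}$, which is the asserted strong bound; the case $p=1$ is identical with the weak norms $WL_q$ and $WM_{q,\varphi_2,P}$ in place of $L_q$ and $M_{q,\varphi_2,P}$.
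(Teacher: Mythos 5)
Your proposal is correct and follows essentially the same route as the proof this statement rests on: the paper does not reprove this Spanne-type theorem (it is imported from \cite{Gurbuz2}), but the argument there --- and the parallel computations this paper itself carries out for the Adams-type Theorem \ref{teo6} --- is precisely your scheme of splitting $f=f_1+f_2$ over a dilated ellipsoid, using the assumed $L_p\to L_q$ (resp.\ $L_1\to WL_q$) boundedness on $f_1$, and running the Fubini--H\"older chain on $f_2$ to produce the kernel $t^{-\gamma/q-1}$ when $s'\le p$ (compare (\ref{34})) or, via Minkowski's integral inequality, the factor $r^{\gamma/q-\gamma/s}$ with kernel $t^{\gamma/s-\gamma/q-1}$ when $q<s$. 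Your concluding step --- upgrading $\Vert f\Vert_{L_p(E(x_0,t))}\le \Vert f\Vert_{M_{p,\varphi_1,P}}\varphi_1(x_0,t)|E(x_0,t)|^{1/p}$ to the $\operatorname*{essinf}$ form by monotonicity and then invoking the two integral conditions on $(\varphi_1,\varphi_2)$ --- is likewise the standard conclusion, so there is nothing to correct.
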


\begin{theorem}
\cite{Gurbuz2}\textbf{(Spanne type result)} Let $\Omega \in L_{s}(S^{n-1})$,
$1<s\leq \infty$, be $A_{t}$-homogeneous of degree zero. Let $[b,T_{\Omega
,\alpha}^{P}]$ be a parabolic sublinear operator satisfying condition
(\ref{e2}) in Theorem \ref{teo100} below and bounded from $L_{p}%
({\mathbb{R}^{n}}) $ to $L_{q}({\mathbb{R}^{n}})$. Let $1<p<\infty$,
$0<\alpha<\frac{\gamma}{\alpha}$, $\frac{1}{q}=\frac{1}{p}-\frac{\alpha
}{\gamma}$ and $b\in BMO_{P}\left(
%TCIMACRO{\U{211d} }%
%BeginExpansion
\mathbb{R}
%EndExpansion
^{n}\right)  $. Let also, for $s^{\prime}\leq p$ the pair $(\varphi
_{1},\varphi_{2})$ satisfies the condition%
\[
\int \limits_{r}^{\infty}\left(  1+\ln \frac{t}{r}\right)  \frac
{\operatorname*{essinf}\limits_{t<\tau<\infty}\varphi_{1}(x,\tau)\tau
^{\frac{\gamma}{p}}}{t^{\frac{\gamma}{q}+1}}dt\leq C\, \varphi_{2}(x,r),
\]
and for $q<s$ the pair $(\varphi_{1},\varphi_{2})$ satisfies the condition%
\[
\int \limits_{r}^{\infty}\left(  1+\ln \frac{t}{r}\right)  \frac
{\operatorname*{essinf}\limits_{t<\tau<\infty}\varphi_{1}(x,\tau)\tau
^{\frac{\gamma}{p}}}{t^{\frac{\gamma}{q}-\frac{\gamma}{s}+1}}dt\leq C\,
\varphi_{2}(x,r)r^{\frac{\gamma}{s}},
\]
where $C$ does not depend on $x$ and $r$.

Then the operator $[b,T_{\Omega,\alpha}^{P}]$ is bounded from $M_{p,\varphi
_{1},P}$ to $M_{q,\varphi_{2},P}$. Moreover,%
\[
\left \Vert \lbrack b,T_{\Omega,\alpha}^{P}]f\right \Vert _{M_{q,\varphi_{2},P}%
}\lesssim \left \Vert b\right \Vert _{\ast}\left \Vert f\right \Vert _{M_{p,\varphi
_{1},P}}.
\]

\end{theorem}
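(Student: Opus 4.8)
The plan is to reduce the global $M_{p,\varphi_{1},P}\to M_{q,\varphi_{2},P}$ bound to a single-ellipsoid (local) estimate, and then to feed that local estimate into the two integral hypotheses on $(\varphi_{1},\varphi_{2})$. Fix an ellipsoid $E_{0}=E(x_{0},r)$ and split $f=f_{1}+f_{2}$ with $f_{1}=f\chi_{2E_{0}}$ and $f_{2}=f\chi_{(2E_{0})^{C}}$, so that on $E_{0}$ we have $[b,T_{\Omega,\alpha}^{P}]f=[b,T_{\Omega,\alpha}^{P}]f_{1}+[b,T_{\Omega,\alpha}^{P}]f_{2}$. The near piece is handled by the assumed $L_{p}\to L_{q}$ boundedness of the commutator (whose operator norm carries the factor $\Vert b\Vert_{\ast}$), giving $\Vert[b,T_{\Omega,\alpha}^{P}]f_{1}\Vert_{L_{q}(E_{0})}\lesssim \Vert b\Vert_{\ast}\Vert f\Vert_{L_{p}(2E_{0})}$; since $2E_{0}=E(x_{0},2r)$ and $\Vert f\Vert_{L_{p}(E(x_{0},t))}$ is nondecreasing in $t$, this is in turn dominated by $\Vert b\Vert_{\ast}r^{\gamma/q}\int_{2r}^{\infty}\Vert f\Vert_{L_{p}(E(x_{0},t))}\,t^{-\gamma/q-1}\,dt$.

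For the far piece I would invoke the generic size condition (\ref{e2}), which controls $|[b,T_{\Omega,\alpha}^{P}]f_{2}(x)|$ for $x\in E_{0}$ by $\int_{(2E_{0})^{C}}|b(x)-b(y)|\,|\Omega(x-y)|\,\rho(x-y)^{-(\gamma-\alpha)}|f(y)|\,dy$. Writing $b(x)-b(y)=(b(x)-b_{E_{0}})+(b_{E_{0}}-b(y))$ splits the estimate into two terms. Because $\rho(x-y)\approx\rho(x_{0}-y)\approx 2^{j}r$ on the dyadic annuli $E(x_{0},2^{j+1}r)\setminus E(x_{0},2^{j}r)$, on each annulus I would apply Hölder's inequality with the three exponents attached to $\Omega\in L_{s}(S^{n-1})$, to $b-b_{E_{0}}$, and to $f\in L_{p}$; admissibility $\tfrac{1}{s}+\tfrac{1}{p}\le1$ is exactly the regime $s^{\prime}\le p$, while the regime $q<s$ forces an alternative Hölder splitting yielding the $r^{\gamma/s}$-weighted variant of the local estimate that matches the second hypothesis. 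The John--Nirenberg corollary (\ref{5.1}) controls the local $L_{u}$-norms of $b-b_{E_{0}}$, and the oscillation bound (\ref{5.2}), $|b_{E(x_{0},t)}-b_{E_{0}}|\lesssim\Vert b\Vert_{\ast}(1+\ln\tfrac{t}{r})$, is precisely what inserts the logarithmic weight $(1+\ln\tfrac{t}{r})$ into the integrand. Summing the resulting series in $j$ and passing to an integral in $t$ yields the local estimate $\Vert[b,T_{\Omega,\alpha}^{P}]f\Vert_{L_{q}(E_{0})}\lesssim\Vert b\Vert_{\ast}\,r^{\gamma/q}\int_{2r}^{\infty}(1+\ln\tfrac{t}{r})\,\Vert f\Vert_{L_{p}(E(x_{0},t))}\,t^{-\gamma/q-1}\,dt$ when $s^{\prime}\le p$, and its $r^{\gamma/s}$-weighted analogue when $q<s$; the near-piece bound from the first paragraph is absorbed here since $(1+\ln\tfrac{t}{r})\ge1$.

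The final assembly is then routine. I would estimate $\Vert f\Vert_{L_{p}(E(x_{0},t))}\le\Vert f\Vert_{M_{p,\varphi_{1},P}}\,\varphi_{1}(x_{0},t)\,|E(x_{0},t)|^{1/p}$; since $\Vert f\Vert_{L_{p}(E(x_{0},t))}\le\Vert f\Vert_{L_{p}(E(x_{0},\tau))}$ for every $\tau\ge t$, the same bound holds with $\varphi_{1}(x_{0},\tau)\tau^{\gamma/p}$ for all such $\tau$, so I may replace $\varphi_{1}(x_{0},t)t^{\gamma/p}$ by $\operatorname*{essinf}_{t<\tau<\infty}\varphi_{1}(x_{0},\tau)\tau^{\gamma/p}$. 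Substituting this into the local estimate, dividing by $\varphi_{2}(x_{0},r)\,|E(x_{0},r)|^{1/q}$, and invoking the first (respectively second) integral hypothesis collapses the $t$-integral to $\lesssim\Vert b\Vert_{\ast}\Vert f\Vert_{M_{p,\varphi_{1},P}}$; taking the supremum over $x_{0}$ and $r$ gives the claimed norm inequality.

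I expect the main obstacle to be the far-term annular estimate in the two Hölder regimes: correctly tracking the exponents so that the $L_{s}$-normalization of $\Omega$, the John--Nirenberg $L_{u}$-control of $b-b_{E_{0}}$, and the $L_{p}$-norm of $f$ combine with the right power of $2^{j}r$, and then verifying that the accumulated logarithmic weights $(1+\ln\tfrac{t}{r})$ coming from (\ref{5.2}) remain summable against the decaying kernel. This is the only step requiring genuine care rather than bookkeeping, and it is also where the dichotomy between the hypotheses $s^{\prime}\le p$ and $q<s$ must be respected.
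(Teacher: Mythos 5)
This theorem is not actually reproved in this paper---it is quoted from \cite{Gurbuz2}---but your outline is essentially the same argument the paper itself deploys for its analogous results (Lemma \ref{Lemma 1} and Theorem \ref{teo100}): the splitting $f=f_{1}+f_{2}$ as in (\ref{e39}), the assumed $L_{p}\to L_{q}$ bound for the near piece, the size condition (\ref{e2}) combined with the decomposition $b(x)-b(y)=(b(x)-b_{E_{0}})+(b_{E_{0}}-b(y))$, the John--Nirenberg estimates (\ref{5.1}) and (\ref{5.2}) producing the factor $\left(1+\ln\frac{t}{r}\right)$, the two H\"{o}lder regimes $s^{\prime}\leq p$ and $q<s$, the essinf trick, and finally the integral hypotheses on $(\varphi_{1},\varphi_{2})$. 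The one correction: in this quasi-metric setting the splitting must be taken over $2kE_{0}$ (with $k$ the constant in the quasi-triangle inequality), not $2E_{0}$, so that $x\in E_{0}$ and $y\notin 2kE_{0}$ force $\rho(x-y)\gtrsim\rho(x_{0}-y)$ and $t>r$, exactly as the paper does in (\ref{e39}) and in the proof of Lemma \ref{Lemma 1}.
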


Now motivated by the above background, it is natural to ask whether we can
prove Adams type inequalities for parabolic sublinear operators with a rough
kernel generated by parabolic fractional integrals and their commutators on
the parabolic generalized Morrey spaces? That is the purpose of this paper is
to find the answer to this question.

Our results can be stated as follows.

\begin{theorem}
\label{teo6}\textbf{(Adams type result)} Suppose that $\Omega \in L_{s}%
(S^{n-1})$, $1<s\leq \infty$, is $A_{t}$-homogeneous of degree zero. Let $1\leq
s^{\prime}<p<q<\infty$, $0<\alpha<\frac{\gamma}{p}$ and let $\varphi \left(
x,t\right)  $ satisfies the conditions%
\begin{equation}
\sup_{r<t<\infty}t^{-\gamma}\operatorname*{essinf}\limits_{t<\tau<\infty
}\varphi \left(  x,\tau \right)  t^{\gamma}\leq C\varphi \left(  x,r\right)
\label{31*}%
\end{equation}
and%
\begin{equation}
\int \limits_{r}^{\infty}t^{\alpha}\varphi \left(  x,t\right)  ^{\frac{1}{p}%
}\frac{dt}{t}\leq Cr^{-\frac{\alpha p}{q-p}},\label{32*}%
\end{equation}
where $C$ does not depend on $x\in%
%TCIMACRO{\U{211d} }%
%BeginExpansion
\mathbb{R}
%EndExpansion
^{n}$ and $r>0$. Let also $T_{\Omega,\alpha}^{P}$ be a parabolic sublinear
operator satisfying conditions%
\begin{equation}
|T_{\Omega,\alpha}^{P}f(x)|\lesssim \int \limits_{{\mathbb{R}^{n}}}\frac
{|\Omega(x-y)|}{{\rho}\left(  x-y\right)  ^{\gamma-\alpha}}%
\,|f(y)|\,dy\label{e1}%
\end{equation}
and%
\begin{equation}
\left \vert T_{\Omega,\alpha}^{P}\left(  f\chi_{E\left(  x_{0},r\right)
}\right)  \left(  x\right)  \right \vert \lesssim r^{\alpha}M_{\Omega}%
^{P}f\left(  x\right) \label{33}%
\end{equation}
holds for any ellipsoid $E\left(  x_{0},r\right)  $.

Then the operator $T_{\Omega,\alpha}^{P}$ is bounded from $M_{p,\varphi
^{\frac{1}{p}},P}$ to $M_{q,\varphi^{\frac{1}{q}},P}$ for $p>1$ and from
$M_{1,\varphi^{\frac{1}{p}},P}$ to $WM_{q,\varphi^{\frac{1}{q}},P}$ for $p=1
$. Moreover, we have for $p>1$%
\[
\left \Vert T_{\Omega,\alpha}^{P}f\right \Vert _{M_{q,\varphi^{\frac{1}{q}},P}%
}\lesssim \left \Vert f\right \Vert _{M_{p,\varphi^{\frac{1}{p}},P}},
\]
and for $p=1$%
\[
\left \Vert T_{\Omega,\alpha}^{P}f\right \Vert _{WM_{q,\varphi^{\frac{1}{q}},P}%
}\lesssim \left \Vert f\right \Vert _{M_{1,\varphi^{\frac{1}{p}},P}}.
\]

\end{theorem}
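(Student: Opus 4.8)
The plan is to follow the Adams--Hedberg scheme: first derive a pointwise estimate that interpolates the rough parabolic maximal operator against the Morrey norm of $f$, and then convert it into the $M_{q,\varphi^{1/q},P}$ statement using the boundedness of $M_{\Omega}^{P}$ on parabolic generalized Morrey spaces. Fix $x\in\mathbb{R}^{n}$ and $r>0$, and split $f=f_{1}+f_{2}$ with $f_{1}=f\chi_{E(x,2r)}$ and $f_{2}=f\chi_{\mathbb{R}^{n}\setminus E(x,2r)}$. For the local part I would apply the pointwise size condition (\ref{33}) on the ellipsoid $E(x,2r)$, which gives $|T_{\Omega,\alpha}^{P}f_{1}(x)|\lesssim r^{\alpha}M_{\Omega}^{P}f(x)$.

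For the global part I would use (\ref{e1}). Since $y\notin E(x,2r)$ forces $\rho(x-y)\geq 2r$, I write $\rho(x-y)^{-(\gamma-\alpha)}=(\gamma-\alpha)\int_{\rho(x-y)}^{\infty}t^{-(\gamma-\alpha)-1}\,dt$ and apply Fubini to obtain
\[
|T_{\Omega,\alpha}^{P}f_{2}(x)|\lesssim \int_{2r}^{\infty}\Big(\int_{E(x,t)}|\Omega(x-y)|\,|f(y)|\,dy\Big)\frac{dt}{t^{\gamma-\alpha+1}}.
\]
The inner integral is handled by H\"older's inequality in the form $\|\Omega(x-\cdot)\|_{L_{s}(E(x,t))}\,\|f\|_{L_{s'}(E(x,t))}$ together with $\|f\|_{L_{s'}(E(x,t))}\lesssim |E(x,t)|^{1/s'-1/p}\|f\|_{L_{p}(E(x,t))}$, which is where the hypothesis $s'<p$ enters. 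The anisotropic bound $\|\Omega(x-\cdot)\|_{L_{s}(E(x,t))}\lesssim t^{\gamma/s}\|\Omega\|_{L_{s}(S^{n-1})}$ follows from the $A_{t}$-homogeneity of $\Omega$ and the polar-coordinate formula $(1\text{-}3)$. Collecting exponents and inserting $\|f\|_{L_{p}(E(x,t))}\lesssim \varphi(x,t)^{1/p}t^{\gamma/p}\|f\|_{M_{p,\varphi^{1/p},P}}$ yields
\[
|T_{\Omega,\alpha}^{P}f_{2}(x)|\lesssim \|f\|_{M_{p,\varphi^{1/p},P}}\int_{2r}^{\infty}t^{\alpha}\varphi(x,t)^{1/p}\frac{dt}{t}\lesssim \|f\|_{M_{p,\varphi^{1/p},P}}\,r^{-\frac{\alpha p}{q-p}},
\]
the last step being exactly condition (\ref{32*}).

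Combining the two parts gives $|T_{\Omega,\alpha}^{P}f(x)|\lesssim r^{\alpha}M_{\Omega}^{P}f(x)+r^{-\frac{\alpha p}{q-p}}\|f\|_{M_{p,\varphi^{1/p},P}}$ for every $r>0$. Optimizing in $r$ (the minimiser balances the two terms, with the exponents $\alpha$ and $\tfrac{\alpha p}{q-p}$ summing to $\tfrac{\alpha q}{q-p}$) produces the Hedberg-type pointwise bound $|T_{\Omega,\alpha}^{P}f(x)|\lesssim \big(M_{\Omega}^{P}f(x)\big)^{p/q}\|f\|_{M_{p,\varphi^{1/p},P}}^{1-p/q}$. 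Raising to the power $q$, integrating over $E(x_{0},r)$ and taking $q$-th roots gives $\|T_{\Omega,\alpha}^{P}f\|_{L_{q}(E(x_{0},r))}\lesssim \|f\|_{M_{p,\varphi^{1/p},P}}^{1-p/q}\,\|M_{\Omega}^{P}f\|_{L_{p}(E(x_{0},r))}^{p/q}$. It then remains to invoke the boundedness of $M_{\Omega}^{P}$ on $M_{p,\varphi^{1/p},P}$, which holds under condition (\ref{31*}) and $s'<p$ and is of weak $(1,1)$ type when $p=1$, to replace $\|M_{\Omega}^{P}f\|_{L_{p}(E(x_{0},r))}$ by $\varphi(x_{0},r)^{1/p}r^{\gamma/p}\|f\|_{M_{p,\varphi^{1/p},P}}$. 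Substituting and multiplying by $\varphi(x_{0},r)^{-1/q}|E(x_{0},r)|^{-1/q}$, the powers of $\varphi(x_{0},r)$ and $r$ cancel, and taking the supremum over $(x_{0},r)$ gives $\|T_{\Omega,\alpha}^{P}f\|_{M_{q,\varphi^{1/q},P}}\lesssim \|f\|_{M_{p,\varphi^{1/p},P}}$. For $p=1$ the same pointwise bound feeds the weak estimate: the sublevel set $\{|T_{\Omega,\alpha}^{P}f|>\lambda\}$ is contained in a sublevel set of $M_{\Omega}^{P}f$, and the weak-type bound for $M_{\Omega}^{P}$ yields the $WM_{q,\varphi^{1/q},P}$ conclusion.

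I expect the main obstacle to be the global estimate, specifically the anisotropic control $\|\Omega(x-\cdot)\|_{L_{s}(E(x,t))}\lesssim t^{\gamma/s}\|\Omega\|_{L_{s}(S^{n-1})}$ and its clean interaction with the two H\"older steps: one must pass from the sphere $S^{n-1}$ to the ellipsoids $E(x,t)$ through the mixed-homogeneity polar coordinates and the bound $1\leq J(x')\leq M$ on the Jacobian, and keep track of every exponent so that condition (\ref{32*}) applies verbatim. The optimization in $r$ and the cancellation of the $\varphi$-powers are routine once these two building blocks are in place; the only other delicate point is citing or establishing the Morrey boundedness of $M_{\Omega}^{P}$ under (\ref{31*}), which underlies the final step.
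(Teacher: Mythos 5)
Your proposal is correct and follows essentially the same route as the paper's own proof: the same decomposition $f=f_1+f_2$ over a dilated ellipsoid, condition (\ref{33}) for the local part, Fubini plus H\"older plus the Morrey norm and condition (\ref{32*}) for the tail, the Hedberg-type optimization in $r$, and finally the boundedness of $M_{\Omega}^{P}$ on $M_{p,\varphi^{1/p},P}$ (Theorem 4.2 in \cite{GUL-BAL}) under condition (\ref{31*}), with the weak-type variant for $p=1$. The only cosmetic differences are that you split H\"older into two steps (via $L_{s'}$) where the paper uses a single three-exponent application, and you use $E(x,2r)$ where the paper uses $2kE$ to account for the quasi-metric constant.
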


\begin{corollary}
Under the conditions of Theorem \ref{teo6}, the operators $M_{\Omega,\alpha
}^{P}$ and $I_{\Omega,\alpha}^{P}$ are bounded from $M_{p,\varphi^{\frac{1}%
{p}},P}$ to $M_{q,\varphi^{\frac{1}{q}},P}$ for $p>1$ and from $M_{1,\varphi
,P}$ to $WM_{q,\varphi^{\frac{1}{q}},P}$ for $p=1$.
\end{corollary}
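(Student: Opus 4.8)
The plan is to derive the Corollary from Theorem~\ref{teo6} by verifying that both $M_{\Omega,\alpha}^{P}$ and $I_{\Omega,\alpha}^{P}$ fall under the scope of the abstract operator $T_{\Omega,\alpha}^{P}$, i.e.\ that they satisfy the two structural hypotheses \eqref{e1} and \eqref{33}. Since Theorem~\ref{teo6} is already established for any parabolic sublinear operator obeying these size conditions, the entire content of the Corollary reduces to this verification together with a standard $L_p\to L_q$ boundedness input; no new Morrey-space estimation is needed.

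\textbf{First}, I would dispose of condition \eqref{e1}. For $I_{\Omega,\alpha}^{P}$ this is immediate: by the triangle inequality in the defining integral,
\[
|I_{\Omega,\alpha}^{P}f(x)|=\left|\int\limits_{{\mathbb{R}^{n}}}\frac{\Omega(x-y)}{{\rho}\left(x-y\right)^{\gamma-\alpha}}f(y)\,dy\right|\leq\int\limits_{{\mathbb{R}^{n}}}\frac{|\Omega(x-y)|}{{\rho}\left(x-y\right)^{\gamma-\alpha}}\,|f(y)|\,dy,
\]
so \eqref{e1} holds with constant $1$. For $M_{\Omega,\alpha}^{P}$ I would rewrite each averaging integral over $E(x,t)$ using $|E(x,t)|^{-1+\frac{\alpha}{\gamma}}=\upsilon_{\rho}^{-1+\frac{\alpha}{\gamma}}t^{\alpha-\gamma}$ and bound $t^{\alpha-\gamma}\chi_{E(x,t)}(y)\lesssim{\rho}(x-y)^{\alpha-\gamma}$ on the support (since ${\rho}(x-y)<t$), whence
\[
M_{\Omega,\alpha}^{P}f(x)=\sup_{t>0}|E(x,t)|^{-1+\frac{\alpha}{\gamma}}\int\limits_{E(x,t)}|\Omega(x-y)||f(y)|\,dy\lesssim\int\limits_{{\mathbb{R}^{n}}}\frac{|\Omega(x-y)|}{{\rho}\left(x-y\right)^{\gamma-\alpha}}\,|f(y)|\,dy,
\]
giving \eqref{e1} for the maximal operator as well.

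\textbf{Next}, I would check the localized domination \eqref{33}, which is the step most likely to need care. Fixing an ellipsoid $E(x_0,r)$ and replacing $f$ by $f\chi_{E(x_0,r)}$, I would split the kernel region dyadically in ${\rho}(x-y)$ and use the $A_t$-homogeneity of degree zero of $\Omega$ together with $\Omega\in L_s(S^{n-1})$ to control the angular part via H\"older's inequality (this is where the hypothesis $s'<p$ and the polar-coordinate factorization $dx=\rho^{\gamma-1}d\sigma(w)\,d\rho$ from $(1\text{-}3)$ enter). The key point is that on $E(x_0,r)$ the factor ${\rho}(x-y)^{\alpha}$ is bounded by $\lesssim r^{\alpha}$, so after extracting $r^{\alpha}$ the remaining average is dominated by $M_{\Omega}^{P}f(x)$; for $M_{\Omega,\alpha}^{P}$ the inequality \eqref{33} is essentially built into its definition since the extra $r^{\alpha}$ comes directly from the scaling of $|E(x_0,t)|^{\frac{\alpha}{\gamma}}$ truncated at $t\approx r$. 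The $L_p\to L_q$ boundedness required to invoke Theorem~\ref{teo6} follows from the classical fractional-integral theory on the homogeneous space $({\mathbb{R}^n},\rho,dx)$ combined with $\Omega\in L_s$, exactly as recorded in the relation $M_{\alpha}^{P}f\leq\nu_n^{\frac{\alpha}{\gamma}-1}I_{\alpha}^{P}(|f|)$ cited earlier.

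\textbf{Finally}, with \eqref{e1} and \eqref{33} verified and the $L_p\to L_q$ (resp.\ $L_1\to WL_q$) mapping in hand, both operators satisfy every hypothesis of Theorem~\ref{teo6}, and applying that theorem directly yields boundedness from $M_{p,\varphi^{1/p},P}$ to $M_{q,\varphi^{1/q},P}$ for $p>1$ and from $M_{1,\varphi,P}$ to $WM_{q,\varphi^{1/q},P}$ for $p=1$, which is precisely the claim. The main obstacle, as noted, is the clean extraction of the $r^{\alpha}$ factor in \eqref{33} while keeping the angular $L_s$-norm of $\Omega$ under control uniformly in the ellipsoid; once that dyadic-plus-H\"older estimate is in place, the rest is a formal appeal to the already-proved abstract theorem.
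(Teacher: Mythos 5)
Your proposal is correct and follows what is essentially the paper's (implicit) argument: the paper states this corollary without separate proof precisely because $M_{\Omega,\alpha}^{P}$ and $I_{\Omega,\alpha}^{P}$ satisfy the size conditions \eqref{e1} and \eqref{33} (for $x$ in the ellipsoid, which is all the proof of Theorem \ref{teo6} uses), after which Theorem \ref{teo6} applies verbatim. Two minor cleanups: in verifying \eqref{33} for $I_{\Omega,\alpha}^{P}$ the decisive step is the geometric-series summation over the dyadic shells (each shell contributing $(2^{-j}r)^{\alpha}M_{\Omega}^{P}f(x)$, summable since $\alpha>0$) rather than H\"older against $\left\Vert \Omega\right\Vert _{L_{s}(S^{n-1})}$ --- the rough factor is absorbed shell-by-shell into $M_{\Omega}^{P}$, while $s^{\prime}<p$ and H\"older enter elsewhere (in estimate \eqref{34} and in the Morrey boundedness of $M_{\Omega}^{P}$ from \cite{GUL-BAL}) --- and Theorem \ref{teo6} as stated does not hypothesize $L_{p}\rightarrow L_{q}$ boundedness, so that input in your final step is superfluous.
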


In the case of $\varphi \left(  x,r\right)  =r^{\lambda-\gamma}$,
$0<\lambda<\gamma$ from Theorem \ref{teo6} we get the following Adams type
result (see \cite{Adams}) for the parabolic fractional maximal and integral
operators with rough kernels.

\begin{corollary}
Suppose that $\Omega \in L_{s}(S^{n-1})$, $1<s\leq \infty$, is $A_{t}%
$-homogeneous of degree zero. Let $0<\alpha<\gamma$, $1<p<\frac{\gamma}%
{\alpha}$, $s^{\prime}<p<q<\infty$, $0<\lambda<\gamma-\alpha p$ and $\frac
{1}{p}-\frac{1}{q}=\frac{\alpha}{\gamma-\lambda}$. Then the operators
$M_{\Omega,\alpha}^{P}$ and $I_{\Omega,\alpha}^{P}$ are bounded from
$M_{p,\lambda,P}$ to $M_{q,\lambda,P}$.
\end{corollary}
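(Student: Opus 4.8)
The plan is to obtain this statement as a direct specialization of Theorem \ref{teo6} (in the form recorded in the Corollary immediately following it, which asserts that $M_{\Omega,\alpha}^{P}$ and $I_{\Omega,\alpha}^{P}$ fall under its scope, since both satisfy the size condition (\ref{e1}) and the localization estimate (\ref{33})) to the weight $\varphi(x,r)=r^{\lambda-\gamma}$. The first thing I would do is identify the function spaces. By Definition \ref{Definition1} and the identification recorded just after it, taking $\varphi(x,r)=r^{\lambda-\gamma}$ gives $\varphi(x,r)^{\frac{1}{p}}=r^{\frac{\lambda-\gamma}{p}}$ and $\varphi(x,r)^{\frac{1}{q}}=r^{\frac{\lambda-\gamma}{q}}$, so that $M_{p,\varphi^{1/p},P}=M_{p,\lambda,P}$ and $M_{q,\varphi^{1/q},P}=M_{q,\lambda,P}$, with the \emph{same} $\lambda$ in source and target, exactly as in the classical Adams theorem. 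Thus it suffices to check that this choice of $\varphi$ meets the two structural hypotheses (\ref{31*}) and (\ref{32*}) of Theorem \ref{teo6} under the stated relations among $p,q,\alpha,\lambda,\gamma$.

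Next I would verify (\ref{31*}). Substituting $\varphi(x,\tau)=\tau^{\lambda-\gamma}$, the inner quantity $\operatorname*{essinf}_{t<\tau<\infty}\varphi(x,\tau)\tau^{\gamma}$ becomes $\operatorname*{essinf}_{t<\tau<\infty}\tau^{\lambda}$; since $\lambda>0$ the map $\tau\mapsto\tau^{\lambda}$ is increasing, so this essential infimum equals $t^{\lambda}$. Multiplying by $t^{-\gamma}$ gives $t^{\lambda-\gamma}$, and because $\lambda-\gamma<0$ the supremum over $r<t<\infty$ is attained at $t=r$, yielding $r^{\lambda-\gamma}=\varphi(x,r)$. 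Hence (\ref{31*}) holds with $C=1$, uniformly in $x$.

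The main (and essentially only) computation is the verification of (\ref{32*}). With $\varphi(x,t)^{\frac{1}{p}}=t^{\frac{\lambda-\gamma}{p}}$ the integrand is $t^{\alpha+\frac{\lambda-\gamma}{p}-1}$, so the integral $\int_{r}^{\infty}t^{\alpha+\frac{\lambda-\gamma}{p}-1}\,dt$ converges precisely when $\alpha+\frac{\lambda-\gamma}{p}<0$, i.e. when $\lambda<\gamma-\alpha p$, which is exactly the standing hypothesis $0<\lambda<\gamma-\alpha p$. Under convergence the integral equals a constant multiple of $r^{\alpha+\frac{\lambda-\gamma}{p}}$, and the point I would highlight is that the relation $\frac{1}{p}-\frac{1}{q}=\frac{\alpha}{\gamma-\lambda}$, equivalently $\gamma-\lambda=\frac{\alpha pq}{q-p}$, forces $\alpha+\frac{\lambda-\gamma}{p}=\alpha-\frac{\alpha q}{q-p}=-\frac{\alpha p}{q-p}$, so the right-hand exponent in (\ref{32*}) matches on the nose. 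Thus (\ref{32*}) holds. Having checked $1\le s'<p<q<\infty$ and $0<\alpha<\frac{\gamma}{p}$ (both immediate from $s'<p<q<\infty$ and $1<p<\frac{\gamma}{\alpha}$), I would then invoke Theorem \ref{teo6} (via the Corollary following it) to conclude that $M_{\Omega,\alpha}^{P}$ and $I_{\Omega,\alpha}^{P}$ are bounded from $M_{p,\lambda,P}$ to $M_{q,\lambda,P}$. There is no genuine obstacle here: the content is entirely in the algebraic identity showing that the two parameter constraints $\lambda<\gamma-\alpha p$ and $\frac{1}{p}-\frac{1}{q}=\frac{\alpha}{\gamma-\lambda}$ are precisely the convergence condition and the exponent-matching condition for (\ref{32*}), while the boundedness itself is inherited from Theorem \ref{teo6}.
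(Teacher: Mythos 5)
Your proposal is correct and follows exactly the route the paper intends: the paper states this corollary as an immediate specialization of Theorem \ref{teo6} (via the corollary following it) with $\varphi(x,r)=r^{\lambda-\gamma}$, offering no written verification. Your computations supply precisely the omitted details, and they check out: the essential infimum in (\ref{31*}) equals $t^{\lambda}$ so the supremum gives $r^{\lambda-\gamma}=\varphi(x,r)$, and in (\ref{32*}) the hypothesis $\lambda<\gamma-\alpha p$ is the convergence condition while $\frac{1}{p}-\frac{1}{q}=\frac{\alpha}{\gamma-\lambda}$ yields $\alpha+\frac{\lambda-\gamma}{p}=-\frac{\alpha p}{q-p}$, matching the required exponent.
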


In the case of $\Omega=1$ from Theorem \ref{teo6}, we get

\begin{corollary}
\label{corollary1}Let $1\leq p<\infty$, $0<\alpha<\frac{\gamma}{p}$, $p<q$,
and let also $\varphi \left(  x,t\right)  $ satisfies conditions (\ref{31*})
and (\ref{32*}). Then the operators $M_{\alpha}$ and $\overline{T}_{\alpha} $
are bounded from $M_{p,\varphi^{\frac{1}{p}},P}$ to $M_{q,\varphi^{\frac{1}%
{q}},P}$ for $p>1$ and from $M_{1,\varphi,P}$ to $WM_{q,\varphi^{\frac{1}{q}%
},P}$ for $p=1$.
\end{corollary}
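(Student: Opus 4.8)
The plan is to read off Corollary~\ref{corollary1} as the special case $\Omega\equiv1$ of Theorem~\ref{teo6}; the only work is to confirm that the structural hypotheses of that theorem survive the specialization and that the two named operators fall under its scope. First I would note that the constant function $\Omega\equiv1$ lies in $L_s(S^{n-1})$ for every $1<s\le\infty$ and is trivially $A_t$-homogeneous of degree zero. Taking $s=\infty$ makes $s'=1$, so the admissible range $1\le s'<p<q<\infty$ in Theorem~\ref{teo6} collapses to $1<p<q<\infty$, which is exactly the hypothesis ``$p<q$, $p>1$'' of the corollary; the remaining requirements $0<\alpha<\frac{\gamma}{p}$ and conditions \eqref{31*}, \eqref{32*} on $\varphi$ are identical in the two statements and need no change. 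With $\Omega\equiv1$ the auxiliary operator $M_\Omega^P$ appearing in \eqref{33} becomes the parabolic Hardy--Littlewood maximal operator $M^P:=M_{1,0}^P$, whose strong $(p,p)$ and weak $(1,1)$ bounds are exactly what drives the proof of Theorem~\ref{teo6}.

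Next I would check the size condition \eqref{e1} for each operator with $\Omega\equiv1$, i.e.\ the bound $|Tf(x)|\lesssim I_\alpha^P(|f|)(x)$. For $\overline{T}_\alpha$ this is built into its definition as the $\Omega\equiv1$ instance of the generic parabolic sublinear operator, so \eqref{e1} holds by hypothesis. For $M_\alpha\equiv M_\alpha^P=M_{1,\alpha}^P$ it is furnished by the pointwise domination $M_\alpha^P f(x)\le\nu_n^{\frac{\alpha}{\gamma}-1}I_\alpha^P(|f|)(x)$ already recorded in the excerpt (from \cite{Li-Yang}), which is precisely \eqref{e1} with kernel equal to $1$.

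The one estimate I expect to spell out, and the main (if routine) obstacle, is the localized domination \eqref{33} for the fractional maximal operator, namely $M_\alpha^P(f\chi_{E(x_0,r)})(x)\lesssim r^\alpha M^P f(x)$ in the regime $x\in E(x_0,r)$ in which it is invoked inside the proof of Theorem~\ref{teo6}. Using $|E(x,t)|=\upsilon_\rho t^\gamma$ I would rewrite
\[
M_\alpha^P(f\chi_{E(x_0,r)})(x)=\upsilon_\rho^{\frac{\alpha}{\gamma}-1}\sup_{t>0}t^{\alpha}\,t^{-\gamma}\!\!\int_{E(x,t)\cap E(x_0,r)}|f(y)|\,dy
\]
and split the supremum by the size of $t$: while $E(x,t)$ stays within a fixed dilate of $E(x_0,r)$ one has $t^\alpha\lesssim r^\alpha$ and the remaining average is $\lesssim M^P f(x)$, whereas once $E(x_0,r)\subset E(x,t)$ the inner integral saturates to $\int_{E(x_0,r)}|f|$ and the factor $t^{\alpha-\gamma}$ decays (since $\alpha<\gamma$), so for $x\in E(x_0,r)$ the worst $t$ is of order $r$ and the supremum is again $\lesssim r^\alpha M^P f(x)$; the quasi-triangle inequality for $\rho$ and the doubling identity $|E(x,t)|=\upsilon_\rho t^\gamma$ carry the geometric bookkeeping. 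For $\overline{T}_\alpha$ the bound \eqref{33} is assumed as part of its definition, with $M_\Omega^P=M^P$.

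Having matched the parameters and verified \eqref{e1} and \eqref{33} for both $M_\alpha^P$ and $\overline{T}_\alpha$, the $p>1$ assertion is then an immediate application of Theorem~\ref{teo6}, yielding boundedness from $M_{p,\varphi^{1/p},P}$ to $M_{q,\varphi^{1/q},P}$. For the endpoint $p=1$, where $\varphi^{1/p}=\varphi$ so that the source space is $M_{1,\varphi,P}$, the corresponding weak-type conclusion $M_{1,\varphi,P}\to WM_{q,\varphi^{1/q},P}$ is already contained in Theorem~\ref{teo6}; since the kernel is the bounded function $\Omega\equiv1$, the constraint $s'<p$ imposes nothing at $p=1$, and the argument goes through verbatim with the weak $(1,1)$ bound for $M^P$ in place of the strong $(p,p)$ bound. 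This completes the specialization, and hence the proof of the corollary.
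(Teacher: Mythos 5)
Your proposal is correct and takes essentially the same route as the paper: Corollary \ref{corollary1} appears there with no separate proof, being read off exactly as you do — the specialization $\Omega\equiv1$ (so one may take $s=\infty$, $s'=1$) of Theorem \ref{teo6}. Your additional verifications — that $M_{\alpha}^{P}$ satisfies \eqref{e1} via $M_{\alpha}^{P}f\le\nu_{n}^{\frac{\alpha}{\gamma}-1}I_{\alpha}^{P}\left(\left\vert f\right\vert\right)$, that \eqref{33} holds for $x\in E\left(x_{0},r\right)$ by splitting the supremum at $t\approx r$, and that at $p=1$ the constraint $s'<p$ becomes harmless since $\Omega\equiv1$ requires no H\"{o}lder step in $s$ — simply make explicit the routine details the paper leaves implicit.
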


In the case of $\varphi \left(  x,t\right)  =t^{\lambda-\gamma}$,
$0<\lambda<\gamma$ from Corollary \ref{corollary1} we get Theorem \ref{teo2}.

\begin{theorem}
\label{teo100}\textbf{(Adams type result)} Suppose that $\Omega \in
L_{s}(S^{n-1})$, $1<s\leq \infty$, is $A_{t}$-homogeneous of degree zero. Let
$1<s^{\prime}<p<q<\infty$, $0<\alpha<\frac{\gamma}{p}$, $b\in BMO_{P}\left(
{\mathbb{R}^{n}}\right)  $ and let $\varphi \left(  x,t\right)  $ satisfies the
conditions%
\begin{equation}
\sup_{r<t<\infty}t^{-\frac{\gamma}{p}}\left(  1+\ln \frac{t}{r}\right)
^{p}\operatorname*{essinf}\limits_{t<\tau<\infty}\varphi \left(  x,\tau \right)
t^{\frac{\gamma}{p}}\leq C\varphi \left(  x,r\right)  ,\label{67}%
\end{equation}
and%
\begin{equation}
\int \limits_{r}^{\infty}\left(  1+\ln \frac{t}{r}\right)  t^{\alpha}%
\varphi \left(  x,t\right)  ^{\frac{1}{p}}\frac{dt}{t}\leq Cr^{-\frac{\alpha
p}{q-p}},\label{74}%
\end{equation}
where $C$ does not depend on $x\in%
%TCIMACRO{\U{211d} }%
%BeginExpansion
\mathbb{R}
%EndExpansion
^{n}$ and $r>0$. Let also $[b,T_{\Omega,\alpha}^{P}]$ be a sublinear operator
satisfying conditions%
\begin{equation}
|[b,T_{\Omega,\alpha}^{P}]f(x)|\lesssim%
%TCIMACRO{\dint \limits_{{\mathbb{R}^{n}}}}%
%BeginExpansion
{\displaystyle \int \limits_{{\mathbb{R}^{n}}}}
%EndExpansion
|b(x)-b(y)|\, \frac{|\Omega(x-y)|}{{\rho}\left(  x-y\right)  ^{\gamma-\alpha}%
}\,|f(y)|\,dy\label{e2}%
\end{equation}
and%
\begin{equation}
\left \vert \lbrack b,T_{\Omega,\alpha}^{P}]\left(  f\chi_{E\left(
x_{0},r\right)  }\right)  \left(  x\right)  \right \vert \lesssim r^{\alpha
}M_{\Omega,b}^{P}f\left(  x\right) \label{100}%
\end{equation}
holds for any ellipsoid $E\left(  x_{0},r\right)  $.

Then the operator $[b,T_{\Omega,\alpha}^{P}]$ is bounded from $M_{p,\varphi
^{\frac{1}{p}},P}$ to $M_{q,\varphi^{\frac{1}{q}},P}$. Moreover%
\[
\left \Vert \lbrack b,T_{\Omega,\alpha}^{P}]f\right \Vert _{M_{q,\varphi
^{\frac{1}{q}},P}}\lesssim \Vert b\Vert_{\ast}\left \Vert f\right \Vert
_{M_{p,\varphi^{\frac{1}{p}},P}}.
\]

\end{theorem}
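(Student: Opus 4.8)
The plan is to run the Adams--Hedberg scheme, taking the proof of Theorem \ref{teo6} as a template and inserting the extra logarithmic factors coming from $BMO_P$. Fix $x\in\mathbb{R}^n$ and $r>0$ and split $f=f\chi_{E(x,r)}+f\chi_{E(x,r)^{C}}$. For the local piece, hypothesis (\ref{100}) gives at once $|[b,T_{\Omega,\alpha}^{P}](f\chi_{E(x,r)})(x)|\lesssim r^{\alpha}M_{\Omega,b}^{P}f(x)$. For the global piece I would use the size condition (\ref{e2}), write $\rho(x-y)^{-(\gamma-\alpha)}\approx\int_{\rho(x-y)}^{\infty}t^{\alpha-\gamma-1}\,dt$, and interchange the order of integration to reduce the tail to $\int_{r}^{\infty}t^{\alpha-\gamma-1}\big(\int_{E(x,t)}|b(x)-b(y)|\,|\Omega(x-y)|\,|f(y)|\,dy\big)\,dt$.

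The heart of the tail estimate is a three--exponent H\"older inequality. Writing $b(x)-b(y)=(b(x)-b_{E(x,t)})+(b_{E(x,t)}-b(y))$ and choosing $\ell$ with $\frac1p+\frac1s+\frac1\ell=1$ — admissible precisely because $s^{\prime}<p$, so that $\frac1\ell=\frac1{p^{\prime}}-\frac1s>0$ — the inner integral is controlled by $\|f\|_{L_p(E(x,t))}$, by $\|\Omega(x-\cdot)\|_{L_s(E(x,t))}\lesssim\|\Omega\|_{L_s(S^{n-1})}t^{\gamma/s}$ (via $A_t$-homogeneity and the polar formula), and by the $BMO_P$ factors. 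Corollary \ref{Corollary} and (\ref{5.1}) supply $\|b-b_{E(x,t)}\|_{L_\ell(E(x,t))}\lesssim\|b\|_{\ast}t^{\gamma/\ell}$ and $|b_{E(x,r)}-b_{E(x,t)}|\lesssim\|b\|_{\ast}(1+\ln\frac tr)$, producing the weight $(1+\ln\frac tr)$. Inserting $\|f\|_{L_p(E(x,t))}\lesssim\|f\|_{M_{p,\varphi^{1/p},P}}\varphi(x,t)^{1/p}t^{\gamma/p}$ and using $\frac1s+\frac1\ell=\frac1{p^{\prime}}$, the powers of $t$ combine and the whole tail collapses to $\|b\|_{\ast}\|f\|_{M_{p,\varphi^{1/p},P}}\int_{r}^{\infty}(1+\ln\frac tr)t^{\alpha}\varphi(x,t)^{1/p}\frac{dt}{t}$, which by hypothesis (\ref{74}) is $\lesssim\|b\|_{\ast}\|f\|_{M_{p,\varphi^{1/p},P}}\,r^{-\alpha p/(q-p)}$.

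Combining the two pieces gives $|[b,T_{\Omega,\alpha}^{P}]f(x)|\lesssim r^{\alpha}M_{\Omega,b}^{P}f(x)+\|b\|_{\ast}\|f\|_{M_{p,\varphi^{1/p},P}}r^{-\alpha p/(q-p)}$ for every $r>0$. Optimizing in $r$ (equate the two terms, using $\alpha+\frac{\alpha p}{q-p}=\frac{\alpha q}{q-p}$) yields the pointwise bound $|[b,T_{\Omega,\alpha}^{P}]f(x)|\lesssim\big(M_{\Omega,b}^{P}f(x)\big)^{p/q}\big(\|b\|_{\ast}\|f\|_{M_{p,\varphi^{1/p},P}}\big)^{1-p/q}$. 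Taking the $L_q(E(x_0,r))$ norm and pulling out the constant factor reduces matters to $\|M_{\Omega,b}^{P}f\|_{L_p(E(x_0,r))}^{p/q}$; the boundedness of the maximal commutator $M_{\Omega,b}^{P}$ on $M_{p,\varphi^{1/p},P}$ — which is exactly what condition (\ref{67}) is designed to guarantee and which contributes one further factor $\|b\|_{\ast}$ — then gives $\|M_{\Omega,b}^{P}f\|_{L_p(E(x_0,r))}\lesssim\|b\|_{\ast}\|f\|_{M_{p,\varphi^{1/p},P}}\varphi(x_0,r)^{1/p}|E(x_0,r)|^{1/p}$. Bookkeeping the exponents, the powers of $\|b\|_{\ast}$ add to $(1-\frac pq)+\frac pq=1$ and those of $\varphi$ and $|E|$ to $\frac1q$, so after dividing by $\varphi(x_0,r)^{1/q}|E(x_0,r)|^{1/q}$ and taking the supremum one obtains $\|[b,T_{\Omega,\alpha}^{P}]f\|_{M_{q,\varphi^{1/q},P}}\lesssim\|b\|_{\ast}\|f\|_{M_{p,\varphi^{1/p},P}}$.

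The delicate point I expect to be the main obstacle is the term generated by $|b(x)-b_{E(x,t)}|$ in the tail: after the splitting of $b(x)-b(y)$ it leaves behind a factor $|b(x)-b_{E(x,r)}|$ multiplying a fractional--integral tail, and once $r=r(x)$ is fixed by the Hedberg optimization this oscillation factor depends on the point and on the variable scale, so it cannot simply be bounded by a constant. To keep the argument clean I would exploit that every controlling quantity above — the right-hand side of (\ref{e2}), the operator $M_{\Omega,b}^{P}$, and $\|b\|_{\ast}$ — is unchanged upon replacing $b$ by $b-b_{E(x_0,R)}$ on the ambient ellipsoid over which the $M_{q,\varphi^{1/q},P}$ norm is computed, and then control the residual oscillation $|b(x)-b_{E(x_0,R)}|$ in the final $L_q$ integration by H\"older together with the $L^m$ John--Nirenberg estimate (\ref{5.1}), absorbing the scale--ratio logarithms against the decay already extracted from (\ref{74}). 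Making this absorption rigorous, and simultaneously verifying the $M_{p,\varphi^{1/p},P}$--boundedness of $M_{\Omega,b}^{P}$ under (\ref{67}) with the correct single power of $\|b\|_{\ast}$, is the technical crux; the remaining steps (the polar estimate for $\Omega$, the John--Nirenberg bounds, and the two scalar integrations against (\ref{67}) and (\ref{74})) are routine.
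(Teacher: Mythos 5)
Your proposal follows essentially the same route as the paper's proof of Theorem \ref{teo100}: the same decomposition (\ref{e39}) of $f$, the use of (\ref{100}) for the local piece, the Fubini--H\"older--John--Nirenberg treatment of the tail producing the factor $\left(1+\ln\frac{t}{r}\right)$ and feeding into condition (\ref{74}), the Hedberg-type optimization in $r$, and the final appeal to the boundedness of $M_{\Omega,b}^{P}$ on $M_{p,\varphi^{\frac{1}{p}},P}$ under condition (\ref{67}) --- which the paper supplies as Corollary \ref{corollary2}, deduced from Lemma \ref{Lemma 1} and Theorem \ref{teo10}, so you would not need to reprove it. Your exponent bookkeeping (in particular that the powers of $\Vert b\Vert_{\ast}$ sum to exactly $1$, because (\ref{100}) carries no $\Vert b\Vert_{\ast}$ and the Hedberg radius is chosen with $\Vert b\Vert_{\ast}\Vert f\Vert_{M_{p,\varphi^{1/p},P}}$ rather than $\Vert f\Vert_{M_{p,\varphi^{1/p},P}}$ alone) is, if anything, slightly cleaner than the paper's.

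The ``delicate point'' you single out is genuine, but you should know that the paper does not resolve it either: the paper asserts the pointwise tail bound (\ref{75}) with constant $\Vert b\Vert_{\ast}$ merely ``analogously'' to earlier computations, whereas the splitting $b(x)-b(y)=\left(b(x)-b_{E(x,t)}\right)+\left(b_{E(x,t)}-b(y)\right)$ leaves behind, after comparing means across scales, a residual term of the form $|b(x)-b_{E(x,2kr)}|\int_{2kr}^{\infty}t^{\alpha-\frac{\gamma}{p}-1}\Vert f\Vert_{L_{p}(E(x,t))}\,dt$, and $|b(x)-b_{E(x,2kr)}|$ admits no pointwise bound by $\Vert b\Vert_{\ast}$. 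In Lemma \ref{Lemma 1} the analogous term (there called $J_{2}$) is controlled only after taking the $L_{q}(E)$ norm in $x$ via (\ref{5.1}), an option that disappears once the Hedberg radius $r=r(x)$ has been fixed pointwise. So your final paragraph --- renormalizing $b$ by its mean on the ambient ellipsoid and absorbing the residual oscillation in the closing $L_{q}$ integration --- is not an optional refinement but exactly the repair the published argument itself would need; your attempt is therefore no less complete than the paper's proof, though, as you acknowledge, that absorption step remains to be carried out rigorously.
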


\begin{corollary}
Under the conditions of Theorem \ref{teo100}, the operators $M_{\Omega
,b,\alpha}^{P}$ and $[b,I_{\Omega,\alpha}^{P}]$ are bounded from
$M_{p,\varphi^{\frac{1}{p}},P}$ to $M_{q,\varphi^{\frac{1}{q}},P}$.
\end{corollary}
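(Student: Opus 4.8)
The plan is to deduce the corollary directly from Theorem~\ref{teo100}, by checking that both $M_{\Omega,b,\alpha}^{P}$ and $[b,I_{\Omega,\alpha}^{P}]$ satisfy the two structural hypotheses (\ref{e2}) and (\ref{100}) of that theorem. Once these are verified, the quantitative estimate $\lesssim \|b\|_{\ast}\|f\|_{M_{p,\varphi^{1/p},P}}$ follows at once, since $b\in BMO_{P}$ and $\varphi$ already satisfies (\ref{67})--(\ref{74}) by assumption. Thus there are four verifications to carry out, two per operator, and nothing about the space structure needs to be revisited.

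For condition (\ref{e2}) the integral commutator is immediate: by the triangle inequality
\[
|[b,I_{\Omega,\alpha}^{P}]f(x)| \le \int_{{\mathbb{R}^{n}}} |b(x)-b(y)|\,\frac{|\Omega(x-y)|}{{\rho}\left(x-y\right)^{\gamma-\alpha}}\,|f(y)|\,dy .
\]
For the maximal commutator I would use that on $E(x,t)$ one has ${\rho}(x-y)<t$, hence $t^{\alpha-\gamma}\le {\rho}(x-y)^{\alpha-\gamma}$ because $\alpha-\gamma<0$. Writing $|E(x,t)|^{-1+\frac{\alpha}{\gamma}}=\upsilon_{\rho}^{\frac{\alpha}{\gamma}-1}t^{\alpha-\gamma}$ and absorbing $t^{\alpha-\gamma}$ into the integral as ${\rho}(x-y)^{\alpha-\gamma}$ produces a bound independent of $t$, so the supremum over $t$ is dominated by the same right-hand side, giving (\ref{e2}) for $M_{\Omega,b,\alpha}^{P}$.

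The substantive step is condition (\ref{100}), the localized bound $\lesssim r^{\alpha}M_{\Omega,b}^{P}f(x)$, which I verify for $x\in E(x_0,r)$ — the only case used when one computes the $L_q(E(x_0,r))$ norm of the local part inside the proof of Theorem~\ref{teo100}. Fix such $x$; the quasi-triangle inequality $(1\!-\!2)$ with constant $k$ gives $E(x_0,r)\subset E(x,2kr)$. For the integral commutator I would decompose $E(x,2kr)$ into the dyadic annuli $A_j=E(x,2^{-j}\cdot 2kr)\setminus E(x,2^{-j-1}\cdot 2kr)$, $j\ge 0$. On $A_j$ one has ${\rho}(x-y)^{-(\gamma-\alpha)}\lesssim (2^{-j}r)^{\alpha-\gamma}$, and enlarging each annular integral to the ball $E(x,2^{-j}\cdot 2kr)$ of measure $\upsilon_{\rho}(2^{-j}\cdot 2kr)^{\gamma}$ bounds the $j$-th term by $(2^{-j}r)^{\alpha-\gamma}(2^{-j}r)^{\gamma}M_{\Omega,b}^{P}f(x)\approx 2^{-j\alpha}r^{\alpha}M_{\Omega,b}^{P}f(x)$; summing the geometric series (convergent since $\alpha>0$) yields (\ref{100}).

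For the maximal commutator I would instead split the supremum defining $M_{\Omega,b,\alpha}^{P}(f\chi_{E(x_0,r)})(x)$ into the ranges $t\le 2kr$ and $t>2kr$. Using $|E(x,t)|^{-1+\frac{\alpha}{\gamma}}=\upsilon_{\rho}^{\frac{\alpha}{\gamma}}t^{\alpha}|E(x,t)|^{-1}$ together with $E(x,t)\cap E(x_0,r)\subset E(x,t)$, the first range gives $t^{\alpha}|E(x,t)|^{-1}\int_{E(x,t)}\lesssim (2kr)^{\alpha}M_{\Omega,b}^{P}f(x)$; in the second range $E(x,t)\cap E(x_0,r)=E(x_0,r)\subset E(x,2kr)$ and the decreasing factor $|E(x,t)|^{-1+\frac{\alpha}{\gamma}}\le |E(x,2kr)|^{-1+\frac{\alpha}{\gamma}}$ collapses everything onto $E(x,2kr)$, again yielding $(2kr)^{\alpha}M_{\Omega,b}^{P}f(x)$. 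Taking the supremum over both ranges gives (\ref{100}). With (\ref{e2}) and (\ref{100}) in hand for each operator, Theorem~\ref{teo100} applies verbatim and delivers the corollary. I expect the only real delicacy to be the geometric bookkeeping in (\ref{100}): matching the ellipsoid $E(x_0,r)$ centered at $x_0$ to the balls $E(x,t)$ centered at the evaluation point $x$ through the quasi-triangle constant $k$, and confirming that the accumulated exponents of $r$ cancel to exactly $r^{\alpha}$; the factor $\|b\|_{\ast}$ enters solely through Theorem~\ref{teo100} and needs no separate treatment.
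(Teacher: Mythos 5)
Your proposal is correct and takes essentially the same route as the paper, which states this corollary without separate proof precisely because both operators are sublinear and satisfy conditions (\ref{e2}) and (\ref{100}) — exactly the verifications you carry out before invoking Theorem \ref{teo100}. If anything you are more careful than the source: restricting (\ref{100}) to $x\in E\left(  x_{0},r\right)  $ is the right reading (the pointwise bound with $r^{\alpha}$ can fail for $x$ far from the ellipsoid, since there the natural bound is $\rho\left(  x-x_{0}\right)  ^{\alpha}M_{\Omega,b}^{P}f\left(  x\right)  $), and your dyadic-annulus argument for $[b,I_{\Omega,\alpha}^{P}]$ and two-range splitting of the supremum for $M_{\Omega,b,\alpha}^{P}$ are exactly the standard verifications, with the geometric series converging because $\alpha>0$.
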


In the case of $\Omega=1$ from Theorem \ref{teo100}, we get

\begin{corollary}
Let $1<p<\infty$, $0<\alpha<\frac{\gamma}{p}$, $p<q$, $b\in BMO_{P}\left(
{\mathbb{R}^{n}}\right)  $and let also $\varphi \left(  x,t\right)  $ satisfies
conditions (\ref{67}) and (\ref{74}). Then the operators $M_{b,\alpha}^{P}$
and $[b,I_{\alpha}^{P}]$ are bounded from $M_{p,\varphi^{\frac{1}{p}},P}$ to
$M_{q,\varphi^{\frac{1}{q}},P}$.
\end{corollary}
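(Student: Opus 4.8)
The plan is to run the Adams machinery exactly as for Theorem \ref{teo6}, with the scalar maximal operator $M_\Omega^P$ replaced throughout by the maximal commutator $M_{\Omega,b}^P$ and with every geometric estimate upgraded by a logarithmic factor coming from \eqref{67}, \eqref{74}. Everything will rest on a single pointwise Adams inequality: for each $x$ and each $\delta>0$,
\[
|[b,T_{\Omega,\alpha}^P]f(x)|\lesssim \delta^{\alpha}\,M_{\Omega,b}^P f(x)+\delta^{-\frac{\alpha p}{q-p}}\,\|b\|_{\ast}\,\|f\|_{M_{p,\varphi^{1/p},P}}.
\]
Granting this, I will optimize in $\delta$ and pass to the Morrey norm.

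To prove the pointwise inequality I would fix $x$, $\delta$ and split $f=f\chi_{E(x,2k\delta)}+f\chi_{E^{C}(x,2k\delta)}=:f_1+f_2$, with $k$ the constant in $(1-2)$. Hypothesis \eqref{100} disposes of the near part at once: $|[b,T_{\Omega,\alpha}^P]f_1(x)|\lesssim \delta^{\alpha}M_{\Omega,b}^P f(x)$. For $f_2$ I would use the size bound \eqref{e2}, break $E^{C}(x,2k\delta)$ into dyadic shells $E(x,2^{j+1}\delta)\setminus E(x,2^{j}\delta)$, and on each shell write $|b(x)-b(y)|\le |b(x)-b_{E(x,2^{j+1}\delta)}|+|b(y)-b_{E(x,2^{j+1}\delta)}|$. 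The inner integrals $\int_{E(x,t)}|\Omega(x-y)|\,|f(y)|\,dy$ and $\int_{E(x,t)}|b(y)-b_{E(x,t)}|\,|\Omega(x-y)|\,|f(y)|\,dy$ are then estimated by Hölder with the three exponents $\frac{1}{p_1}+\frac1s+\frac1p=1$ (legitimate because $s'<p$ forces $\frac{1}{p_1}=\frac{1}{s'}-\frac1p>0$): the kernel factor contributes $t^{\gamma/s}\|\Omega\|_{L_{s}(S^{n-1})}$ by zero-homogeneity and the polar-coordinate formula, the oscillation factor contributes $t^{\gamma/p_1}\|b\|_{\ast}$ through the $L^{p_1}$ John--Nirenberg bound \eqref{5.1}, and the $f$-factor contributes $t^{\gamma/p}\varphi(x,t)^{1/p}\|f\|_{M_{p,\varphi^{1/p},P}}$. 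Recombining the shells, converting the sum into an integral in $t$, and feeding in the logarithmic growth $|b_{E(x,\delta)}-b_{E(x,t)}|\lesssim \|b\|_{\ast}(1+\ln\frac{t}{\delta})$ from Corollary \ref{Corollary}, condition \eqref{74} collapses the tail to the asserted $\delta^{-\alpha p/(q-p)}$ bound.

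Writing $\beta=\frac{\alpha p}{q-p}$ and minimizing $\delta^{\alpha}A+\delta^{-\beta}B$ over $\delta>0$ (with $A=M_{\Omega,b}^P f(x)$ and $B=\|b\|_{\ast}\|f\|_{M_{p,\varphi^{1/p},P}}$) yields, since $\frac{\beta}{\alpha+\beta}=\frac pq$,
\[
|[b,T_{\Omega,\alpha}^P]f(x)|\lesssim \bigl(M_{\Omega,b}^P f(x)\bigr)^{p/q}\bigl(\|b\|_{\ast}\|f\|_{M_{p,\varphi^{1/p},P}}\bigr)^{1-\frac pq}.
\]
Taking $L_q(E(x_0,r))$ norms turns the first factor into $\|M_{\Omega,b}^P f\|_{L_p(E(x_0,r))}^{p/q}$, and \eqref{67} is exactly the condition guaranteeing that $M_{\Omega,b}^P$ is bounded on $M_{p,\varphi^{1/p},P}$ with constant $\lesssim\|b\|_{\ast}$, i.e. $\|M_{\Omega,b}^P f\|_{L_p(E(x_0,r))}\lesssim \|b\|_{\ast}\varphi(x_0,r)^{1/p}r^{\gamma/p}\|f\|_{M_{p,\varphi^{1/p},P}}$. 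Substituting and collecting the powers of $\varphi(x_0,r)$ and $r$ reproduces the weight $\varphi(x_0,r)^{1/q}r^{\gamma/q}$ defining $M_{q,\varphi^{1/q},P}$, which is the claim.

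The step I expect to fight hardest is the tail, because splitting $|b(x)-b(y)|$ against a fixed center produces the extra oscillation $|b(x)-b_{E(x,\delta)}|$ paired with $\delta^{-\beta}\|f\|_{M_{p,\varphi^{1/p},P}}$ -- a term absent in the scalar Theorem \ref{teo6} that couples the optimization radius $\delta$ to the base point $x$. Controlling it is precisely what forces the logarithmic weights in \eqref{67} and \eqref{74} in place of the plain \eqref{31*}, \eqref{32*}, and what requires the John--Nirenberg machinery (Theorem \ref{Theorem} and Corollary \ref{Corollary}) both to linearize the oscillation in $L^{p_1}$ and to absorb the $(1+\ln\frac{t}{\delta})$ growth before \eqref{74} is applied. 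The $\Omega$-estimates and the polar-coordinate identity transfer verbatim from the scalar argument, so the genuinely new analysis is confined to these $BMO_{P}$ bookkeeping terms.
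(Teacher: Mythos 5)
Your scheme is exactly the one the paper uses: it obtains this corollary by specializing Theorem \ref{teo100} to $\Omega\equiv1$, and Theorem \ref{teo100} is proved there by precisely your steps (Hedberg splitting, hypothesis (\ref{100}) for the near part, the size condition (\ref{e2}) plus H\"{o}lder, John--Nirenberg and condition (\ref{74}) for the tail, optimization in the radius, then boundedness of $M_{\Omega,b}^{P}$ on $M_{p,\varphi^{1/p},P}$ via condition (\ref{67}) and Corollary \ref{corollary2}). The problem is that the ``single pointwise Adams inequality'' on which you say everything rests is false, and the tools you cite cannot repair it. You have located the obstruction yourself: splitting $|b(x)-b(y)|$ through the averages $b_{E(x,2^{j+1}\delta)}$ leaves the factor $|b(x)-b_{E(x,2k\delta)}|$, so what your tail argument actually proves is
\[
|[b,T_{\Omega,\alpha}^{P}]f(x)|\lesssim \delta^{\alpha}M_{\Omega,b}^{P}f(x)+\left(\Vert b\Vert_{\ast}+|b(x)-b_{E(x,2k\delta)}|\right)\delta^{-\frac{\alpha p}{q-p}}\Vert f\Vert_{M_{p,\varphi^{1/p},P}}.
\]
The John--Nirenberg bound (\ref{5.1}) linearizes the oscillation $|b(y)-b_{E}|$ in the \emph{integration} variable $y$, and (\ref{5.2}) controls $|b_{E(x,\delta)}-b_{E(x,t)}|$ between nested ellipsoids; neither bounds $|b(x)-b_{E(x,2k\delta)}|$, which is an unbounded function of $x$ and cannot be replaced by $C\Vert b\Vert_{\ast}$.

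That this is not a removable technicality can be seen concretely: take $P=I$ (so $\gamma=n$, $k=1$), $\Omega\equiv1$, $b(y)=\ln|y|$, $\varphi(x,t)=t^{\lambda-n}$ with $\frac1p-\frac1q=\frac{\alpha}{n-\lambda}$ (then (\ref{67}) and (\ref{74}) hold), $f=\chi_{\{A<|y|<2A\}}$, and a point $|x|=\epsilon<1$ with $\delta=1$. Then $|[b,I_{\alpha}^{P}]f(x)|\approx A^{\alpha}\bigl(\ln A+\ln\frac1\epsilon\bigr)$, while $M_{b}^{P}f(x)\approx\ln A+\ln\frac1\epsilon$ and $\Vert f\Vert_{M_{p,\varphi^{1/p},P}}\approx A^{(n-\lambda)/p}$; choosing $\ln\frac1\epsilon=A^{(n-\lambda)/p}$ makes the left side about $A^{\alpha}$ times larger than your right side, even after optimizing over $\delta$. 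So the pointwise bound fails (on a tiny exceptional set of $x$, which is why the norm statement itself is not contradicted), and the coupling term must be handled at the level of $L_{q}(E)$ norms, not pointwise. In fairness, the paper's own proof of Theorem \ref{teo100} commits the same unjustified step: its inequality (\ref{75}) silently discards $|b(x)-b_{E}|$, so you have faithfully reproduced the published argument, gap included --- but it is a genuine gap, not a fixable bookkeeping issue within your framework. Separately, for this particular corollary you would also need to verify that the concrete operators $M_{b,\alpha}^{P}$ and $[b,I_{\alpha}^{P}]$ satisfy (\ref{e2}) and (\ref{100}): you use (\ref{100}) as a hypothesis, but here it is a claim requiring proof (a dyadic-shell computation for $[b,I_{\alpha}^{P}]$ and a scale comparison for $M_{b,\alpha}^{P}$), and it appears nowhere in your proposal.
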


\section{Proofs of the main resuls}

\subsection{Proof of Theorem \ref{teo6}}

\begin{proof}
Let $1<p<\infty$, $0<\alpha<\frac{n}{p}$, $p<q$, and $f\in M_{p,\varphi
^{\frac{1}{p}}}$. Set $E=E\left(  x_{0},r\right)  $ for the parabolic ball
(ellipsoid) centered at $x_{0}$ and of radius $r$ and $2E=E\left(
x_{0},2r\right)  $. We represent $f$ as%
\begin{equation}
f=f_{1}+f_{2},\qquad \text{\ }f_{1}\left(  y\right)  =f\left(  y\right)
\chi_{2kE}\left(  y\right)  ,\qquad \text{\ }f_{2}\left(  y\right)  =f\left(
y\right)  \chi_{\left(  2kE\right)  ^{C}}\left(  y\right)  ,\qquad
r>0\label{e39}%
\end{equation}
and have
\[
\left \vert T_{\Omega,\alpha}^{P}f\left(  x\right)  \right \vert \leq \left \vert
T_{\Omega,\alpha}^{P}f_{1}\left(  x\right)  \right \vert +\left \vert
T_{\Omega,\alpha}^{P}f_{2}\left(  x\right)  \right \vert .
\]

For $T_{\Omega,\alpha}^{P}f_{2}\left(  x\right)  $ we have%
\[
\left \vert T_{\Omega,\alpha}^{P}f_{2}\left(  x\right)  \right \vert
\lesssim \int \limits_{\left(  2kE\right)  ^{C}}\frac{\left \vert f\left(
y\right)  \right \vert \left \vert \Omega \left(  x-y\right)  \right \vert }%
{\rho \left(  x-y\right)  ^{\gamma-\alpha}}dy.
\]

By Fubini's theorem, H\"{o}lder's inequality and $\left[  \text{(2.5) in
\cite{Gurbuz2}}\right]  $, we get%
\begin{align}
\int \limits_{\left(  2kE\right)  ^{C}}\frac{\left \vert f\left(  y\right)
\right \vert \left \vert \Omega \left(  x-y\right)  \right \vert }{\rho \left(
x-y\right)  ^{\gamma-\alpha}}dy  & \approx \int \limits_{\left(  2kE\right)
^{C}}\left \vert f\left(  y\right)  \right \vert \left \vert \Omega \left(
x-y\right)  \right \vert \int \limits_{\rho \left(  x-y\right)  }^{\infty}%
\frac{dt}{t^{\gamma+1-\alpha}}dy\nonumber \\
& \approx \int \limits_{2kr}^{\infty}\int \limits_{2kr\leq \rho \left(  x-y\right)
\leq t}\left \vert f\left(  y\right)  \right \vert \left \vert \Omega \left(
x-y\right)  \right \vert dy\frac{dt}{t^{\gamma+1-\alpha}}\nonumber \\
& \lesssim \int \limits_{2kr}^{\infty}\int \limits_{E\left(  x,t\right)
}\left \vert f\left(  y\right)  \right \vert \left \vert \Omega \left(
x-y\right)  \right \vert dy\frac{dt}{t^{\gamma+1-\alpha}}\nonumber \\
& \lesssim \int \limits_{2kr}^{\infty}\left \Vert f\right \Vert _{L_{p}\left(
E\left(  x,t\right)  \right)  }\left \Vert \Omega \left(  x-\cdot \right)
\right \Vert _{L_{s}\left(  E\left(  x,t\right)  \right)  }\left \vert E\left(
x,t\right)  \right \vert ^{1-\frac{1}{p}-\frac{1}{s}}\frac{dt}{t^{\gamma
+1-\alpha}}\nonumber \\
& \lesssim \int \limits_{2kr}^{\infty}\left \Vert f\right \Vert _{L_{p}\left(
E\left(  x,t\right)  \right)  }\left \Vert \Omega \left(  x-\cdot \right)
\right \Vert _{L_{s}\left(  E\left(  x,t\right)  \right)  }\left \vert E\left(
x,t\right)  \right \vert ^{1-\frac{1}{p}-\frac{1}{s}}\frac{dt}{t^{\gamma
+1-\alpha}}\nonumber \\
& \lesssim \int \limits_{2kr}^{\infty}\left \Vert f\right \Vert _{L_{p}\left(
E\left(  x,t\right)  \right)  }\left \Vert \Omega \left(  x-\cdot \right)
\right \Vert _{L_{s}\left(  E\left(  x,t\right)  \right)  }\left \vert E\left(
x,t\right)  \right \vert ^{1-\frac{1}{p}-\frac{1}{s}}\frac{dt}{t^{\gamma
+1-\alpha}}\nonumber \\
& \lesssim \int \limits_{2kr}^{\infty}t^{\alpha-\frac{\gamma}{p}-1}\left \Vert
f\right \Vert _{L_{p}\left(  E\left(  x,t\right)  \right)  }dt.\label{34}%
\end{align}

Then from conditions (\ref{32*}), (\ref{33}) and inequality (\ref{34}) we get%
\begin{align*}
\left \vert T_{\Omega,\alpha}^{P}f\left(  x\right)  \right \vert  & \lesssim
r^{\alpha}M_{\Omega}^{P}f\left(  x\right)  +\int \limits_{2kr}^{\infty
}t^{\alpha-\frac{\gamma}{p}-1}\left \Vert f\right \Vert _{L_{p}\left(  E\left(
x,t\right)  \right)  }dt\\
& \leq r^{\alpha}M_{\Omega}^{P}f\left(  x\right)  +\left \Vert f\right \Vert
_{M_{p,\varphi^{\frac{1}{p}},P}}\int \limits_{2kr}^{\infty}t^{\alpha}%
\varphi \left(  x,t\right)  ^{\frac{1}{p}}\frac{dt}{t}\\
& \lesssim r^{\alpha}M_{\Omega}^{P}f\left(  x\right)  +r^{-\frac{\alpha
p}{q-p}}\left \Vert f\right \Vert _{M_{p,\varphi^{\frac{1}{p}},P}}.
\end{align*}

Hence choosing $r=\left(  \frac{\left \Vert f\right \Vert _{M_{p,\varphi
^{\frac{1}{p}},P}}}{M_{\Omega}^{P}f\left(  x\right)  }\right)  ^{\frac
{q-p}{\alpha q}}$ for every $x\in%
%TCIMACRO{\U{211d} }%
%BeginExpansion
\mathbb{R}
%EndExpansion
^{n}$, we have%
\[
\left \vert T_{\Omega,\alpha}^{P}f\left(  x\right)  \right \vert \lesssim \left(
M_{\Omega}^{P}f\left(  x\right)  \right)  ^{\frac{p}{q}}\left \Vert
f\right \Vert _{M_{p,\varphi^{\frac{1}{p}},P}}^{1-\frac{p}{q}}.
\]

Consequently the statement of the theorem follows in view of the boundedness
of the maximal operator with rough kernel $M_{\Omega}^{P}$ in $M_{p,\varphi
^{\frac{1}{p}},P}$ provided by Theorem 4.2 in \cite{GUL-BAL} in virtue of
condition (\ref{31*}).

Therefore, we obtain%
\begin{align*}
\left \Vert T_{\Omega,\alpha}^{P}f\right \Vert _{M_{q,\varphi^{\frac{1}{q}},P}}
& =\sup_{x\in%
%TCIMACRO{\U{211d} }%
%BeginExpansion
\mathbb{R}
%EndExpansion
^{n},\text{ }t>0}\varphi \left(  x,t\right)  ^{-\frac{1}{q}}t^{-\frac{\gamma
}{q}}\left \Vert T_{\Omega,\alpha}^{P}f\right \Vert _{L_{q}\left(  E\left(
x,t\right)  \right)  }\\
& \lesssim \left \Vert f\right \Vert _{M_{p,\varphi^{\frac{1}{p}},P}}^{1-\frac
{p}{q}}\sup_{x\in%
%TCIMACRO{\U{211d} }%
%BeginExpansion
\mathbb{R}
%EndExpansion
^{n},\text{ }t>0}\varphi \left(  x,t\right)  ^{-\frac{1}{q}}t^{-\frac{\gamma
}{q}}\left \Vert M_{\Omega}^{P}f\right \Vert _{L_{p}\left(  E\left(  x,t\right)
\right)  }^{\frac{p}{q}}\\
& =\left \Vert f\right \Vert _{M_{p,\varphi^{\frac{1}{p}},P}}^{1-\frac{p}{q}%
}\left(  \sup_{x\in%
%TCIMACRO{\U{211d} }%
%BeginExpansion
\mathbb{R}
%EndExpansion
^{n},\text{ }t>0}\varphi \left(  x,t\right)  ^{-\frac{1}{p}}t^{-\frac{\gamma
}{p}}\left \Vert M_{\Omega}^{P}f\right \Vert _{L_{p}\left(  E\left(  x,t\right)
\right)  }\right)  ^{\frac{p}{q}}\\
& =\left \Vert f\right \Vert _{M_{p,\varphi^{\frac{1}{p}},P}}^{1-\frac{p}{q}%
}\left \Vert M_{\Omega}^{P}f\right \Vert _{M_{p,\varphi^{\frac{1}{p}},P}}%
^{\frac{p}{q}}\\
& \lesssim \left \Vert f\right \Vert _{M_{p,\varphi^{\frac{1}{p}},P}},
\end{align*}
if $1<p<q<\infty$ and%
\begin{align*}
\left \Vert T_{\Omega,\alpha}^{P}f\right \Vert _{M_{q,\varphi^{\frac{1}{q}},P}}
& =\sup_{x\in%
%TCIMACRO{\U{211d} }%
%BeginExpansion
\mathbb{R}
%EndExpansion
^{n},\text{ }t>0}\varphi \left(  x,t\right)  ^{-\frac{1}{q}}t^{-\frac{\gamma
}{q}}\left \Vert T_{\Omega,\alpha}^{P}f\right \Vert _{WL_{q}\left(  E\left(
x,t\right)  \right)  }\\
& \lesssim \left \Vert f\right \Vert _{M_{1,\varphi,P}}^{1-\frac{1}{q}}\sup_{x\in%
%TCIMACRO{\U{211d} }%
%BeginExpansion
\mathbb{R}
%EndExpansion
^{n},\text{ }t>0}\varphi \left(  x,t\right)  ^{-\frac{1}{q}}t^{-\frac{\gamma
}{q}}\left \Vert M_{\Omega}^{P}f\right \Vert _{WL_{1}\left(  E\left(
x,t\right)  \right)  }^{\frac{1}{q}}\\
& =\left \Vert f\right \Vert _{M_{1,\varphi,P}}^{1-\frac{1}{q}}\left(
\sup_{x\in%
%TCIMACRO{\U{211d} }%
%BeginExpansion
\mathbb{R}
%EndExpansion
^{n},\text{ }t>0}\varphi \left(  x,t\right)  ^{-1}t^{-n}\left \Vert M_{\Omega
}^{P}f\right \Vert _{WL_{1}\left(  E\left(  x,t\right)  \right)  }\right)
^{\frac{1}{q}}\\
& =\left \Vert f\right \Vert _{M_{1,\varphi,P}}^{1-\frac{1}{q}}\left \Vert
M_{\Omega}^{P}f\right \Vert _{WM_{1,\varphi,P}}^{\frac{1}{q}}\\
& \lesssim \left \Vert f\right \Vert _{M_{1,\varphi,P}},
\end{align*}
if $1<q<\infty$.

Hence, the proof is completed.
\end{proof}

Before giving the proof of Theorem \ref{teo100}, we introduce some lemmas and
theorems about the estimates of the parabolic sublinear commutator of the
parabolic fractional maximal operator with rough kernel on the parabolic
generalized Morrey spaces.

\begin{lemma}
\label{Lemma 1}Let $\Omega \in L_{s}(S^{n-1})$, $1<s\leq \infty$, be $A_{t}%
$-homogeneous of degree zero. Let $1<p<\infty$, $0<\alpha<\frac{\gamma}{p}$,
$\frac{1}{q}=\frac{1}{p}-\frac{\alpha}{\gamma}$, $b\in BMO_{P}\left(
{\mathbb{R}^{n}}\right)  $ and $M_{\Omega,b,\alpha}^{P}$ is bounded from
$L_{p}({\mathbb{R}^{n}})$ to $L_{q}({\mathbb{R}^{n}})$. Then for $s^{\prime
}<p$, the inequality%
\[
\Vert M_{\Omega,b,\alpha}^{P}f\Vert_{L_{q}(E(x_{0},r))}\lesssim \Vert
b\Vert_{\ast}\,r^{\frac{\gamma}{q}}\sup_{t>2kr}\left(  1+\ln \frac{t}%
{r}\right)  t^{-\frac{\gamma}{q}}\Vert f\Vert_{L_{p}(E(x_{0},t))}%
\]
holds for any ellipsoid $E(x_{0},r)$ and for all $f\in L_{p}^{loc}%
({\mathbb{R}^{n}})$.
\end{lemma}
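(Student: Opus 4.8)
The plan is to follow the standard Guliyev-type local argument: split $f$ around the ellipsoid $E:=E(x_{0},r)$ and treat the near and far parts separately. The decisive point is that the sublinear maximal operator should be estimated \emph{directly}, not dominated by a fractional integral, so that the supremum structure on the right-hand side (rather than a divergent integral) is preserved. Concretely, I would write $f=f_{1}+f_{2}$ with $f_{1}=f\chi_{2kE}$ and $f_{2}=f\chi_{(2kE)^{C}}$ as in \eqref{e39}, and use the sublinearity of $M_{\Omega,b,\alpha}^{P}$ to reduce to estimating $\Vert M_{\Omega,b,\alpha}^{P}f_{1}\Vert_{L_{q}(E)}$ and $\Vert M_{\Omega,b,\alpha}^{P}f_{2}\Vert_{L_{q}(E)}$.

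For the near part I would invoke the hypothesis that $M_{\Omega,b,\alpha}^{P}$ is bounded from $L_{p}$ to $L_{q}$, which gives $\Vert M_{\Omega,b,\alpha}^{P}f_{1}\Vert_{L_{q}(E)}\lesssim\Vert b\Vert_{\ast}\Vert f\Vert_{L_{p}(2kE)}$. The factor $\Vert f\Vert_{L_{p}(2kE)}$ is then absorbed into the target supremum by monotonicity of $t\mapsto\Vert f\Vert_{L_{p}(E(x_{0},t))}$: for $t$ close to $2kr$ one has $r^{\gamma/q}(1+\ln(t/r))\,t^{-\gamma/q}\Vert f\Vert_{L_{p}(E(x_{0},t))}\gtrsim\Vert f\Vert_{L_{p}(2kE)}$ with a constant depending only on the fixed quasi-triangle constant $k$.

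The heart of the matter is the far part. For $x\in E$ and $y\in(2kE)^{C}$, the quasi-triangle inequality from property (1-2) gives $\rho(x-y)\gtrsim\rho(x_{0}-y)$ and $E(x,t)\subseteq E(x_{0},ct)$, so I may pass freely to balls centered at $x_{0}$ at the cost of harmless constants. Writing $|b(x)-b(y)|\le|b(x)-b_{E}|+|b(y)-b_{E}|$ with $b_{E}=b_{E(x_{0},r)}$ splits $M_{\Omega,b,\alpha}^{P}f_{2}$ into $J_{1}(x)=|b(x)-b_{E}|\,M_{\Omega,\alpha}^{P}f_{2}(x)$ and $J_{2}(x)=M_{\Omega,\alpha}^{P}\big((b(\cdot)-b_{E})f_{2}\big)(x)$. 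For a single ball in the supremum defining $M_{\Omega,\alpha}^{P}$, Hölder's inequality with exponents $s$ and $p$ together with the size estimate $\Vert\Omega(x-\cdot)\Vert_{L_{s}(E(x,t))}\lesssim\Vert\Omega\Vert_{L_{s}(S^{n-1})}t^{\gamma/s}$ (as used in deriving \eqref{34}, cf. \cite{Gurbuz2}) yields $M_{\Omega,\alpha}^{P}f_{2}(x)\lesssim\sup_{t>2kr}t^{\alpha-\gamma/p}\Vert f\Vert_{L_{p}(E(x_{0},t))}$, and the scaling relation $\tfrac1q=\tfrac1p-\tfrac{\alpha}{\gamma}$ turns the exponent into $-\gamma/q$. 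Multiplying by $|b(x)-b_{E}|$ and taking $L_{q}(E)$-norms, the John--Nirenberg corollary $\Vert\,|b-b_{E}|\,\Vert_{L_{q}(E)}\lesssim\Vert b\Vert_{\ast}r^{\gamma/q}$ (from \eqref{5.1}) gives the desired bound for $J_{1}$.

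For $J_{2}$ I would repeat the single-ball estimate but now carry the extra factor $|b(y)-b_{E}|$, applying Hölder with the \emph{three} exponents $s,p,u$ where $\tfrac1u=1-\tfrac1s-\tfrac1p$; this is exactly where the hypothesis $s'<p$ is needed, since it guarantees $\tfrac1s+\tfrac1p<1$ and hence $u>0$. Using $\Vert b-b_{E}\Vert_{L_{u}(E(x_{0},t))}\lesssim\Vert b\Vert_{\ast}(1+\ln(t/r))\,t^{\gamma/u}$, which follows by inserting $b_{E(x_{0},t)}$, bounding $\Vert b-b_{E(x_{0},t)}\Vert_{L_{u}}$ by \eqref{5.1} and $|b_{E(x_{0},t)}-b_{E}|$ by \eqref{5.2}, the powers of $t$ combine as $t^{\alpha-\gamma}\,t^{\gamma/s+\gamma/u}=t^{\alpha-\gamma/p}=t^{-\gamma/q}$ (because $\tfrac1s+\tfrac1u=1-\tfrac1p$), so that $J_{2}(x)\lesssim\Vert b\Vert_{\ast}\sup_{t>2kr}(1+\ln(t/r))\,t^{-\gamma/q}\Vert f\Vert_{L_{p}(E(x_{0},t))}$ independently of $x$; integrating this constant over $E$ supplies the factor $r^{\gamma/q}$. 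Adding the near part, $J_{1}$ and $J_{2}$ then yields the claimed inequality. I expect the main obstacle to be the bookkeeping in $J_{2}$: correctly producing the logarithmic factor from the difference of averages over $E(x_{0},r)$ and $E(x_{0},t)$, and checking that the exponents collapse precisely to $-\gamma/q$. The conceptual point that keeps a supremum rather than a divergent $\int dt/t$ on the right is the direct estimation of the maximal operator rather than its potential-type majorization.
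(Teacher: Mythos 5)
Your proposal is correct and follows essentially the same route as the paper's own proof: the same decomposition $f=f_{1}+f_{2}$ from (\ref{e39}), the $L_{p}\to L_{q}$ boundedness for the near part, the same geometric reduction (for $x\in E$, $E(x,t)\cap(2kE)^{C}\neq\emptyset$ forces $t>r$ and $E(x,t)\cap(2kE)^{C}\subset E(x_{0},2kt)$), the same splitting $|b(x)-b(y)|\le|b(x)-b_{E}|+|b(y)-b_{E}|$, and the same three-exponent H\"{o}lder inequality (with $\frac{1}{\mu}+\frac{1}{p}+\frac{1}{s}=1$, requiring $s'<p$) combined with the John--Nirenberg estimates (\ref{5.1}) and (\ref{5.2}) to produce the logarithmic factor and collapse the exponents to $-\gamma/q$. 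The only differences are cosmetic: your $J_{1},J_{2}$ are the paper's $J_{2},J_{1}$, and you make explicit the absorption of $\Vert f\Vert_{L_{p}(2kE)}$ into the supremum, which the paper asserts without comment.
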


\begin{proof}
Let $1<p<\infty$, $0<\alpha<\frac{\gamma}{p}$ and $\frac{1}{q}=\frac{1}%
{p}-\frac{\alpha}{\gamma}$. As in the proof of Theorem \ref{teo6}, we
represent $f$ in form (\ref{e39}) and have%
\[
\left \Vert M_{\Omega,b,\alpha}^{P}f\right \Vert _{L_{q}\left(  E\right)  }%
\leq \left \Vert M_{\Omega,b,\alpha}^{P}f_{1}\right \Vert _{L_{q}\left(
E\right)  }+\left \Vert M_{\Omega,b,\alpha}^{P}f_{2}\right \Vert _{L_{q}\left(
E\right)  }.
\]
From the boundedness of $M_{\Omega,b,\alpha}^{P}$ from $L_{p}({\mathbb{R}^{n}%
})$ to $L_{q}({\mathbb{R}^{n}})$ (see Corollary 0.1 in \cite{Gurbuz2}) it
follows that:%
\begin{align*}
\left \Vert M_{\Omega,b,\alpha}^{P}f_{1}\right \Vert _{L_{q}\left(  E\right)  }
& \leq \left \Vert M_{\Omega,b,\alpha}^{P}f_{1}\right \Vert _{L_{q}\left(
{\mathbb{R}^{n}}\right)  }\\
& \lesssim \left \Vert b\right \Vert _{\ast}\left \Vert f_{1}\right \Vert
_{L_{p}\left(  {\mathbb{R}^{n}}\right)  }=\left \Vert b\right \Vert _{\ast
}\left \Vert f\right \Vert _{L_{p}\left(  2kE\right)  }.
\end{align*}
For $x\in E$, we have%
\begin{align*}
M_{\Omega,b,\alpha}^{P}f_{2}\left(  x\right)   & \lesssim \sup_{t>0}\frac
{1}{\left \vert E(x,t)\right \vert ^{1-\frac{\alpha}{\gamma}}}\int
\limits_{E\left(  x,t\right)  }\left \vert \Omega \left(  x-y\right)
\right \vert \left \vert b\left(  y\right)  -b\left(  x\right)  \right \vert
\left \vert f_{2}\left(  y\right)  \right \vert dy\\
& =\sup_{t>0}\frac{1}{\left \vert E(x,t)\right \vert ^{1-\frac{\alpha}{\gamma}}%
}\int \limits_{E\left(  x,t\right)  \cap \left(  2kE\right)  ^{C}}\left \vert
\Omega \left(  x-y\right)  \right \vert \left \vert b\left(  y\right)  -b\left(
x\right)  \right \vert \left \vert f\left(  y\right)  \right \vert dy.
\end{align*}

Let $x$ be an arbitrary point from $E$. If $E\left(  x,t\right)  \cap \left \{
\left(  2kE\right)  ^{C}\right \}  \neq \emptyset$, then $t>r$. Indeed, if $y\in
E\left(  x,t\right)  \cap \left \{  \left(  2kE\right)  ^{C}\right \}  $, then
$t>\rho \left(  x-y\right)  \geq \frac{1}{k}\rho \left(  x_{0}-y\right)
-\rho \left(  x_{0}-x\right)  >2r-r=r$.

On the other hand, $E\left(  x,t\right)  \cap \left \{  \left(  2kE\right)
^{C}\right \}  \subset E\left(  x_{0},2kt\right)  $. Indeed, $y\in E\left(
x,t\right)  \cap \left \{  \left(  2kE\right)  ^{C}\right \}  $, then we get
$\rho \left(  x_{0}-y\right)  \leq k\rho \left(  x-y\right)  +k\rho \left(
x_{0}-x\right)  <k\left(  t+r\right)  <2kt$.

Hence%
\begin{align*}
M_{\Omega,b,\alpha}^{P}f_{2}\left(  x\right)   & =\sup_{t>0}\frac
{1}{\left \vert E(x,t)\right \vert ^{1-\frac{\alpha}{n}}}\int \limits_{E\left(
x,t\right)  \cap \left(  2kE\right)  ^{C}}\left \vert \Omega \left(  x-y\right)
\right \vert \left \vert b\left(  y\right)  -b\left(  x\right)  \right \vert
\left \vert f\left(  y\right)  \right \vert dy\\
& \leq \left(  2k\right)  ^{\gamma-\alpha}\sup_{t>r}\frac{1}{\left \vert
E(x_{0},2kt)\right \vert ^{1-\frac{\alpha}{\gamma}}}\int \limits_{E\left(
x_{0},2kt\right)  }\left \vert \Omega \left(  x-y\right)  \right \vert \left \vert
b\left(  y\right)  -b\left(  x\right)  \right \vert \left \vert f\left(
y\right)  \right \vert dy\\
& =\left(  2k\right)  ^{\gamma-\alpha}\sup_{t>2kr}\frac{1}{\left \vert
E(x_{0},t)\right \vert ^{1-\frac{\alpha}{\gamma}}}\int \limits_{E\left(
x_{0},t\right)  }\left \vert \Omega \left(  x-y\right)  \right \vert \left \vert
b\left(  y\right)  -b\left(  x\right)  \right \vert \left \vert f\left(
y\right)  \right \vert dy.
\end{align*}

Therefore, for all $x\in E$ we have%
\begin{equation}
M_{\Omega,b,\alpha}^{P}f_{2}\left(  x\right)  \leq \left(  2k\right)
^{\gamma-\alpha}\sup_{t>2kr}\frac{1}{\left \vert E(x_{0},t)\right \vert
^{1-\frac{\alpha}{\gamma}}}\int \limits_{E\left(  x_{0},t\right)  }\left \vert
\Omega \left(  x-y\right)  \right \vert \left \vert b\left(  y\right)  -b\left(
x\right)  \right \vert \left \vert f\left(  y\right)  \right \vert dy.\label{63}%
\end{equation}

Then%
\begin{align*}
\left \Vert M_{\Omega,b,\alpha}^{P}f_{2}\right \Vert _{L_{q}\left(  E\right)  }
& \lesssim \left(  \int \limits_{E}\left(  \sup_{t>2kr}\frac{1}{\left \vert
E(x_{0},t)\right \vert ^{1-\frac{\alpha}{\gamma}}}\int \limits_{E\left(
x_{0},t\right)  }\left \vert \Omega \left(  x-y\right)  \right \vert \left \vert
b\left(  y\right)  -b\left(  x\right)  \right \vert \left \vert f\left(
y\right)  \right \vert dy\right)  ^{q}dx\right)  ^{\frac{1}{q}}\\
& \leq \left(  \int \limits_{E}\left(  \sup_{t>2kr}\frac{1}{\left \vert
E(x_{0},t)\right \vert ^{1-\frac{\alpha}{\gamma}}}\int \limits_{E\left(
x_{0},t\right)  }\left \vert \Omega \left(  x-y\right)  \right \vert \left \vert
b\left(  y\right)  -b_{E}\right \vert \left \vert f\left(  y\right)  \right \vert
dy\right)  ^{q}dx\right)  ^{\frac{1}{q}}\\
& +\left(  \int \limits_{E}\left(  \sup_{t>2kr}\frac{1}{\left \vert
E(x_{0},t)\right \vert ^{1-\frac{\alpha}{\gamma}}}\int \limits_{E\left(
x_{0},t\right)  }\left \vert \Omega \left(  x-y\right)  \right \vert \left \vert
b\left(  x\right)  -b_{E}\right \vert \left \vert f\left(  y\right)  \right \vert
dy\right)  ^{q}dx\right)  ^{\frac{1}{q}}\\
& =J_{1}+J_{2}.
\end{align*}

Let us estimate $J_{1}$.%
\begin{align*}
J_{1}  & =r^{\frac{\gamma}{q}}\sup_{t>2kr}\frac{1}{\left \vert E(x_{0}%
,t)\right \vert ^{1-\frac{\alpha}{\gamma}}}\int \limits_{E\left(  x_{0}%
,t\right)  }\left \vert \Omega \left(  x-y\right)  \right \vert \left \vert
b\left(  y\right)  -b_{E}\right \vert \left \vert f\left(  y\right)  \right \vert
dy\\
& \approx r^{\frac{\gamma}{q}}\sup_{t>2kr}t^{\alpha-\gamma}\int
\limits_{E\left(  x_{0},t\right)  }\left \vert \Omega \left(  x-y\right)
\right \vert \left \vert b\left(  y\right)  -b_{E}\right \vert \left \vert
f\left(  y\right)  \right \vert dy.
\end{align*}

Applying H\"{o}lder's inequality, by $\left[  \text{(2.8) in \cite{Gurbuz2}%
}\right]  $, (\ref{5.1}), (\ref{5.2}) and $\frac{1}{\mu}+\frac{1}{p}+\frac
{1}{s}=1$ we get%
\begin{align*}
J_{1}  & \lesssim r^{\frac{\gamma}{q}}\sup_{t>2kr}t^{\alpha-\gamma}%
\int \limits_{E\left(  x_{0},t\right)  }\left \vert \Omega \left(  x-y\right)
\right \vert \left \vert b\left(  y\right)  -b_{E\left(  x_{0},t\right)
}\right \vert \left \vert f\left(  y\right)  \right \vert dy\\
& +r^{\frac{\gamma}{q}}\sup_{t>2kr}t^{\alpha-\gamma}\left \vert b_{E\left(
x_{0},r\right)  }-b_{E\left(  x_{0},t\right)  }\right \vert \int
\limits_{E\left(  x_{0},t\right)  }\left \vert \Omega \left(  x-y\right)
\right \vert \left \vert f\left(  y\right)  \right \vert dy\\
& \lesssim r^{\frac{\gamma}{q}}\sup_{t>2kr}t^{\alpha-\frac{\gamma}{p}%
}\left \Vert \Omega \left(  \cdot-y\right)  \right \Vert _{L_{s}\left(  E\left(
x_{0},t\right)  \right)  }\left \Vert \left(  b\left(  \cdot \right)
-b_{E\left(  x_{0},t\right)  }\right)  \right \Vert _{L_{\mu}\left(  E\left(
x_{0},t\right)  \right)  }\left \Vert f\right \Vert _{L_{p}\left(  E\left(
x_{0},t\right)  \right)  }\\
& +r^{\frac{\gamma}{q}}\sup_{t>2kr}t^{\alpha-\gamma}\left \vert b_{E\left(
x_{0},r\right)  }-b_{E\left(  x_{0},t\right)  }\right \vert \left \Vert
\Omega \left(  \cdot-y\right)  \right \Vert _{L_{s}\left(  E\left(
x_{0},t\right)  \right)  }\left \Vert f\right \Vert _{L_{p}\left(  E\left(
x_{0},t\right)  \right)  }\left \vert E\left(  x_{0},t\right)  \right \vert
^{1-\frac{1}{p}-\frac{1}{s}}\\
& \lesssim \Vert b\Vert_{\ast}\,r^{\frac{\gamma}{q}}\sup_{t>2kr}\left(
1+\ln \frac{t}{r}\right)  t^{-\frac{\gamma}{q}}\Vert f\Vert_{L_{p}(E(x_{0}%
,t))}.
\end{align*}

In order to estimate $J_{2}$ note that%
\[
J_{2}=\left \Vert \left(  b\left(  \cdot \right)  -b_{E\left(  x_{0},t\right)
}\right)  \right \Vert _{L_{q}\left(  E\left(  x_{0},t\right)  \right)  }%
\sup_{t>2kr}t^{\alpha-\gamma}\int \limits_{E\left(  x_{0},t\right)  }\left \vert
\Omega \left(  x-y\right)  \right \vert \left \vert f\left(  y\right)
\right \vert dy.
\]

By (\ref{5.1}), we get%
\[
J_{2}\lesssim \Vert b\Vert_{\ast}\,r^{\frac{\gamma}{q}}\sup_{t>2kr}%
t^{\alpha-\gamma}\int \limits_{E\left(  x_{0},t\right)  }\left \vert
\Omega \left(  x-y\right)  \right \vert \left \vert f\left(  y\right)
\right \vert dy.
\]

Thus, by (\ref{34}) and $\left[  \text{(2.5) in \cite{Gurbuz2}}\right]  $
\[
J_{2}\lesssim \Vert b\Vert_{\ast}\,r^{\frac{\gamma}{q}}\sup_{t>2kr}%
t^{-\frac{\gamma}{q}}\left \Vert f\right \Vert _{L_{p}\left(  E\left(
x_{0},t\right)  \right)  }.
\]

Summing up $J_{1}$ and $J_{2}$, for all $p\in \left(  1,\infty \right)  $ we get%
\begin{equation}
\left \Vert M_{\Omega,b,\alpha}^{P}f_{2}\right \Vert _{L_{q}\left(  E\right)
}\lesssim \Vert b\Vert_{\ast}\,r^{\frac{\gamma}{q}}\sup_{t>2kr}t^{-\frac
{\gamma}{q}}\left(  1+\ln \frac{t}{r}\right)  \left \Vert f\right \Vert
_{L_{p}\left(  E\left(  x_{0},t\right)  \right)  }.\label{64}%
\end{equation}

Finally, combining $\left \Vert M_{\Omega,b,\alpha}^{P}f_{1}\right \Vert
_{L_{q}\left(  E\right)  }$ and $\left \Vert M_{\Omega,b,\alpha}^{P}%
f_{2}\right \Vert _{L_{q}\left(  E\right)  }$we have the following%
\begin{align*}
\left \Vert M_{\Omega,b,\alpha}^{P}f\right \Vert _{L_{q}\left(  E\right)  }  &
\lesssim \left \Vert b\right \Vert _{\ast}\left \Vert f\right \Vert _{L_{p}\left(
2kE\right)  }+\Vert b\Vert_{\ast}\,r^{\frac{\gamma}{q}}\sup_{t>2kr}%
t^{-\frac{\gamma}{q}}\left(  1+\ln \frac{t}{r}\right)  \left \Vert f\right \Vert
_{L_{p}\left(  E\left(  x_{0},t\right)  \right)  }\\
& \lesssim \Vert b\Vert_{\ast}\,r^{\frac{\gamma}{q}}\sup_{t>2kr}t^{-\frac
{\gamma}{q}}\left(  1+\ln \frac{t}{r}\right)  \left \Vert f\right \Vert
_{L_{p}\left(  E\left(  x_{0},t\right)  \right)  },
\end{align*}
which completes the proof.
\end{proof}

Similar to Lemma \ref{Lemma 1} the following lemma can also be proved.

\begin{lemma}
Let $\Omega \in L_{s}(S^{n-1})$, $1<s\leq \infty$, be $A_{t}$-homogeneous of
degree zero. Let $1<p<\infty$, $b\in BMO_{P}\left(  {\mathbb{R}^{n}}\right)  $
and $M_{\Omega,b}^{P}$ is bounded on $L_{p}({\mathbb{R}^{n}})$. Then for
$s^{\prime}<p$, the inequality%
\[
\Vert M_{\Omega,b}^{P}f\Vert_{L_{p}(E(x_{0},r))}\lesssim \Vert b\Vert_{\ast
}\,r^{\frac{\gamma}{q}}\sup_{t>2kr}\left(  1+\ln \frac{t}{r}\right)
t^{-\frac{\gamma}{p}}\Vert f\Vert_{L_{p}(E(x_{0},t))}%
\]
holds for any ellipsoid $E(x_{0},r)$ and for all $f\in L_{p}^{loc}%
({\mathbb{R}^{n}})$.
\end{lemma}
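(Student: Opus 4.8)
The plan is to mirror the proof of Lemma~\ref{Lemma 1} verbatim, specializing to the case $\alpha=0$, so that $M_{\Omega,b}^{P}$ coincides with the $\alpha=0$ commutator of the parabolic maximal operator and the target exponent collapses to $p$ (the displayed exponent $\frac{\gamma}{q}$ on $r$ should be read as $\frac{\gamma}{p}$, since $\frac{1}{q}=\frac{1}{p}-\frac{\alpha}{\gamma}=\frac{1}{p}$ when $\alpha=0$). First I would fix $E=E(x_{0},r)$, decompose $f=f_{1}+f_{2}$ exactly as in (\ref{e39}) with $f_{1}=f\chi_{2kE}$ and $f_{2}=f\chi_{(2kE)^{C}}$, and split
\[
\left\Vert M_{\Omega,b}^{P}f\right\Vert_{L_{p}(E)}\le\left\Vert M_{\Omega,b}^{P}f_{1}\right\Vert_{L_{p}(E)}+\left\Vert M_{\Omega,b}^{P}f_{2}\right\Vert_{L_{p}(E)}.
\]
The first term is handled immediately by the assumed $L_{p}$-boundedness of $M_{\Omega,b}^{P}$, giving $\left\Vert M_{\Omega,b}^{P}f_{1}\right\Vert_{L_{p}(E)}\lesssim\Vert b\Vert_{\ast}\Vert f\Vert_{L_{p}(2kE)}$.

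Next I would treat $f_{2}$ through a pointwise estimate. The two geometric observations used in Lemma~\ref{Lemma 1} carry over unchanged, as they do not involve $\alpha$: for $x\in E$, if $E(x,t)\cap(2kE)^{C}\neq\emptyset$ then $t>r$, and $E(x,t)\cap(2kE)^{C}\subset E(x_{0},2kt)$. These yield the analog of (\ref{63}) with $\alpha=0$,
\[
M_{\Omega,b}^{P}f_{2}(x)\lesssim\sup_{t>2kr}\frac{1}{|E(x_{0},t)|}\int\limits_{E(x_{0},t)}|\Omega(x-y)|\,|b(y)-b(x)|\,|f(y)|\,dy.
\]
Writing $b(y)-b(x)=(b(y)-b_{E})-(b(x)-b_{E})$ and taking $L_{p}(E)$ norms decomposes $\left\Vert M_{\Omega,b}^{P}f_{2}\right\Vert_{L_{p}(E)}$ into two pieces $J_{1}+J_{2}$, precisely as before.

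For $J_{1}$ I would apply H\"older's inequality with $\frac{1}{\mu}+\frac{1}{p}+\frac{1}{s}=1$, use the uniform $L_{s}(E(x_{0},t))$ bound for $\Omega(\cdot-y)$ (as in [(2.8) in \cite{Gurbuz2}]), the John--Nirenberg consequences (\ref{5.1}) and (\ref{5.2}) for $\Vert b-b_{E(x_{0},t)}\Vert_{L_{\mu}}$ and $|b_{E(x_{0},r)}-b_{E(x_{0},t)}|$, and $|E(x_{0},t)|\approx t^{\gamma}$; with $\alpha=0$ the bookkeeping produces $t^{-\frac{\gamma}{p}}$ in place of $t^{\alpha-\frac{\gamma}{p}}$, so that $J_{1}\lesssim\Vert b\Vert_{\ast}\,r^{\frac{\gamma}{p}}\sup_{t>2kr}(1+\ln\frac{t}{r})\,t^{-\frac{\gamma}{p}}\Vert f\Vert_{L_{p}(E(x_{0},t))}$. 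For $J_{2}$ I would pull out $\Vert b(\cdot)-b_{E}\Vert_{L_{p}(E)}\lesssim r^{\frac{\gamma}{p}}\Vert b\Vert_{\ast}$ via (\ref{5.1}) and then apply the $\alpha=0$ version of the chain (\ref{34}) to the remaining $\sup_{t>2kr}t^{-\gamma}\int_{E(x_{0},t)}|\Omega(x-y)||f(y)|\,dy$, yielding $J_{2}\lesssim\Vert b\Vert_{\ast}\,r^{\frac{\gamma}{p}}\sup_{t>2kr}t^{-\frac{\gamma}{p}}\Vert f\Vert_{L_{p}(E(x_{0},t))}$. Summing $J_{1}+J_{2}$ gives the analog of (\ref{64}), and I would finish by absorbing the $f_{1}$-term into the supremum, noting it corresponds to the endpoint $t=2kr$ where $1+\ln\frac{t}{r}\approx1$.

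The argument is essentially routine given Lemma~\ref{Lemma 1}; the only points needing care are the exponent tracking under $\alpha=0$ (which forces the output back into $L_{p}$ and explains the $r^{\frac{\gamma}{p}}$ prefactor), and the uniformity in $x$ of the $L_{s}(E(x_{0},t))$ norm of $\Omega(x-\cdot)$, which follows from the $A_{t}$-homogeneity of degree zero of $\Omega$ together with $\Omega\in L_{s}(S^{n-1})$ as in [(2.5)/(2.8) in \cite{Gurbuz2}]. No genuinely new obstacle arises beyond the one already resolved in Lemma~\ref{Lemma 1}.
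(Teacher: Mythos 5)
Your proposal is correct and takes exactly the approach the paper intends: the paper offers no separate argument for this lemma, saying only that it is proved ``similar to Lemma \ref{Lemma 1}'', and your write-up is precisely that specialization to $\alpha=0$, with the decomposition $f=f_1+f_2$, the same geometric inclusions, the same $J_1+J_2$ splitting via $b(y)-b(x)=(b(y)-b_E)-(b(x)-b_E)$, and the same H\"{o}lder/John--Nirenberg bookkeeping. You also correctly flag that the exponent $r^{\frac{\gamma}{q}}$ in the statement is a misprint for $r^{\frac{\gamma}{p}}$ (since $q$ is undefined here and collapses to $p$ when $\alpha=0$), which is the right reading.
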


The following theorem is true.

\begin{theorem}
\label{teo10}Let $\Omega \in L_{s}(S^{n-1})$, $1<s\leq \infty$, be $A_{t}%
$-homogeneous of degree zero. Let $1<p<\infty$, $0\leq \alpha<\frac{\gamma}{p}%
$, $\frac{1}{q}=\frac{1}{p}-\frac{\alpha}{\gamma}$, $b\in BMO_{P}\left(
{\mathbb{R}^{n}}\right)  $ and let $\left(  \varphi_{1},\varphi_{2}\right)  $
satisfies the condition%
\[
\sup_{r<t<\infty}t^{\alpha-\frac{\gamma}{p}}\left(  1+\ln \frac{t}{r}\right)
\operatorname*{essinf}\limits_{t<\tau<\infty}\varphi_{1}\left(  x,\tau \right)
t^{\frac{\gamma}{p}}\leq C\varphi_{2}\left(  x,r\right)  ,
\]
where $C$ does not depend on $x$ and $r$. Then for $s^{\prime}<p$, the
operator $M_{\Omega,b,\alpha}^{P}$ is bounded from $M_{p,\varphi_{1},P}$ to
$M_{q,\varphi_{2},P}$. Moreover%
\[
\left \Vert M_{\Omega,b,\alpha}^{P}f\right \Vert _{M_{q,\varphi_{2},P}}%
\lesssim \Vert b\Vert_{\ast}\left \Vert f\right \Vert _{M_{p,\varphi_{1},P}}.
\]

\end{theorem}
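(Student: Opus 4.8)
The plan is to globalize the single-ellipsoid estimate of Lemma \ref{Lemma 1} to a bound between $M_{p,\varphi_{1},P}$ and $M_{q,\varphi_{2},P}$, letting the hypothesis on the pair $(\varphi_{1},\varphi_{2})$ do all the bookkeeping between the two weight functions. First I would unfold the target norm,
\[
\left\Vert M_{\Omega,b,\alpha}^{P}f\right\Vert_{M_{q,\varphi_{2},P}}=\sup_{x_{0}\in\mathbb{R}^{n},\ r>0}\varphi_{2}(x_{0},r)^{-1}\,|E(x_{0},r)|^{-\frac{1}{q}}\,\left\Vert M_{\Omega,b,\alpha}^{P}f\right\Vert_{L_{q}(E(x_{0},r))},
\]
and insert the estimate of Lemma \ref{Lemma 1} for the inner $L_{q}$-norm. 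Since $|E(x_{0},r)|=\upsilon_{\rho}r^{\gamma}$, the factor $|E(x_{0},r)|^{-\frac{1}{q}}r^{\frac{\gamma}{q}}=\upsilon_{\rho}^{-\frac{1}{q}}$ is a harmless constant, so after this substitution the whole matter reduces to showing
\[
\sup_{t>2kr}\left(1+\ln\frac{t}{r}\right)t^{-\frac{\gamma}{q}}\left\Vert f\right\Vert_{L_{p}(E(x_{0},t))}\lesssim\varphi_{2}(x_{0},r)\,\left\Vert f\right\Vert_{M_{p,\varphi_{1},P}}
\]
uniformly in $x_{0}$ and $r$.

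The key step, which I expect to carry the technical weight, is replacing the local $L_{p}$-norm of $f$ by its $M_{p,\varphi_{1},P}$-norm in a way that produces the $\operatorname*{essinf}$ appearing in the hypothesis. Here I would use that $t\mapsto\left\Vert f\right\Vert_{L_{p}(E(x_{0},t))}$ is nondecreasing: for every $\tau>t$ one has
\[
\left\Vert f\right\Vert_{L_{p}(E(x_{0},t))}\leq\left\Vert f\right\Vert_{L_{p}(E(x_{0},\tau))}\leq\varphi_{1}(x_{0},\tau)\,|E(x_{0},\tau)|^{\frac{1}{p}}\,\left\Vert f\right\Vert_{M_{p,\varphi_{1},P}},
\]
so taking the essential infimum over $\tau\in(t,\infty)$ gives
\[
\left\Vert f\right\Vert_{L_{p}(E(x_{0},t))}\lesssim\left\Vert f\right\Vert_{M_{p,\varphi_{1},P}}\operatorname*{essinf}\limits_{t<\tau<\infty}\varphi_{1}(x_{0},\tau)\,\tau^{\frac{\gamma}{p}}.
\]
Substituting this and using the relation $\frac{1}{q}=\frac{1}{p}-\frac{\alpha}{\gamma}$, which yields $t^{-\frac{\gamma}{q}}=t^{\alpha-\frac{\gamma}{p}}$, turns the supremum above into exactly the left-hand side of the hypothesis on $(\varphi_{1},\varphi_{2})$ (with $\sup_{t>2kr}$ enlarged to $\sup_{t>r}$, which is legitimate since $k\geq1$). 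The condition then furnishes the bound by $C\,\varphi_{2}(x_{0},r)\left\Vert f\right\Vert_{M_{p,\varphi_{1},P}}$.

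Finally I would collect the constants: the factor $\upsilon_{\rho}^{-\frac{1}{q}}$ together with $\varphi_{2}(x_{0},r)^{-1}$ cancels the $\varphi_{2}(x_{0},r)$ produced above, uniformly in $(x_{0},r)$, so that
\[
\left\Vert M_{\Omega,b,\alpha}^{P}f\right\Vert_{M_{q,\varphi_{2},P}}\lesssim\Vert b\Vert_{\ast}\left\Vert f\right\Vert_{M_{p,\varphi_{1},P}},
\]
which is the assertion. It is worth stressing that essentially all of the genuine analysis---the decomposition $f=f_{1}+f_{2}$, the BMO/John--Nirenberg estimates through $J_{1}$ and $J_{2}$, and the underlying $L_{p}\to L_{q}$ boundedness---has already been discharged in Lemma \ref{Lemma 1}; the present argument is the standard Guliyev-type globalization, and its only delicate point is the monotonicity-plus-$\operatorname*{essinf}$ manipulation that matches the single-ellipsoid estimate to the integral-free form of the $(\varphi_{1},\varphi_{2})$ condition.
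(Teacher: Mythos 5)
Your proof is correct and follows essentially the same route as the paper: the paper's entire proof of Theorem \ref{teo10} is a one-line appeal to Lemma \ref{Lemma 1} "in the same manner as in the proof of Theorem 4.1 in \cite{GUL-BAL}", and that cited globalization is precisely the computation you write out --- unfold the $M_{q,\varphi_{2},P}$ norm, insert the lemma, use monotonicity of $t\mapsto\Vert f\Vert_{L_{p}(E(x_{0},t))}$ to produce $\operatorname*{essinf}\limits_{t<\tau<\infty}\varphi_{1}(x_{0},\tau)\tau^{\frac{\gamma}{p}}$, and apply the $(\varphi_{1},\varphi_{2})$ condition via $t^{-\frac{\gamma}{q}}=t^{\alpha-\frac{\gamma}{p}}$. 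The only discrepancy is notational: the factor written as $t^{\frac{\gamma}{p}}$ inside the essinf in the stated hypothesis should be read as $\tau^{\frac{\gamma}{p}}$ (consistent with the Spanne-type conditions earlier in the paper), which is exactly how you interpret and use it.
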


\begin{proof}
The statement of Theorem \ref{teo10} follows by Lemma \ref{Lemma 1} in the
same manner as in the proof of Theorem 4.1 in \cite{GUL-BAL}.
\end{proof}

In the case of $\alpha=0$ and $p=q$, we get the following corollary by Theorem
\ref{teo10}.

\begin{corollary}
\label{corollary2}Let $\Omega \in L_{s}(S^{n-1})$, $1<s\leq \infty$, be $A_{t}%
$-homogeneous of degree zero. Let $1<p<\infty$, $b\in BMO_{P}\left(
{\mathbb{R}^{n}}\right)  $ and let $\left(  \varphi_{1},\varphi_{2}\right)  $
satisfies the condition%
\[
\sup_{r<t<\infty}t^{-\frac{\gamma}{p}}\left(  1+\ln \frac{t}{r}\right)
\operatorname*{essinf}\limits_{t<\tau<\infty}\varphi_{1}\left(  x,\tau \right)
t^{\frac{\gamma}{p}}\leq C\varphi_{2}\left(  x,r\right)  ,
\]
where $C$ does not depend on $x$ and $r$. Then for $s^{\prime}<p$, the
operator $M_{\Omega,b}^{P}$ is bounded from $M_{p,\varphi_{1},P}$ to
$M_{q,\varphi_{2},P}$. Moreover%
\[
\left \Vert M_{\Omega,b}^{P}f\right \Vert _{M_{q,\varphi_{2},P}}\lesssim \Vert
b\Vert_{\ast}\left \Vert f\right \Vert _{M_{p,\varphi_{1},P}}.
\]

\end{corollary}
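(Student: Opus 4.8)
The plan is to read off Corollary \ref{corollary2} as the borderline endpoint $\alpha=0$ of Theorem \ref{teo10}, so that essentially no new analytic argument is needed beyond checking that the specialization is legitimate. First I would set $\alpha=0$ throughout the hypotheses of Theorem \ref{teo10}. Then the fractional commutator maximal operator $M_{\Omega,b,\alpha}^{P}$ collapses to $M_{\Omega,b}^{P}$, the Sobolev relation $\frac{1}{q}=\frac{1}{p}-\frac{\alpha}{\gamma}$ forces $q=p$, and the factor $t^{\alpha-\frac{\gamma}{p}}$ in the admissibility condition of Theorem \ref{teo10} degenerates to $t^{-\frac{\gamma}{p}}$. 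Consequently that condition becomes exactly the hypothesis imposed on the pair $(\varphi_{1},\varphi_{2})$ in the corollary, and the target space $M_{q,\varphi_{2},P}$ is just $M_{p,\varphi_{2},P}$.

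Second, I would trace the proof of Theorem \ref{teo10} --- which itself reduces, via Lemma \ref{Lemma 1}, to the pattern of Theorem 4.1 in \cite{GUL-BAL} --- in the degenerate case $\alpha=0$. The relevant local estimate is the $\alpha=0$ analogue of Lemma \ref{Lemma 1}, i.e. the unnamed lemma stated immediately before Theorem \ref{teo10}, which gives
\[
\Vert M_{\Omega,b}^{P}f\Vert_{L_{p}(E(x_{0},r))}\lesssim \Vert b\Vert_{\ast}\,r^{\frac{\gamma}{p}}\sup_{t>2kr}\left(1+\ln\frac{t}{r}\right)t^{-\frac{\gamma}{p}}\Vert f\Vert_{L_{p}(E(x_{0},t))}
\]
for every ellipsoid $E(x_{0},r)$. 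Forming the $M_{p,\varphi_{2},P}$ quotient (multiplying by $\varphi_{2}(x_{0},r)^{-1}|E(x_{0},r)|^{-1/p}$), bounding the inner supremum over $t>2kr$ by means of the $(\varphi_{1},\varphi_{2})$ condition, and finally taking the supremum over $x_{0}$ and $r$, one obtains
\[
\Vert M_{\Omega,b}^{P}f\Vert_{M_{p,\varphi_{2},P}}\lesssim \Vert b\Vert_{\ast}\Vert f\Vert_{M_{p,\varphi_{1},P}},
\]
exactly as in the cited GUL-BAL scheme.

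The one ingredient that must be imported from outside is the strong $(p,p)$ boundedness of $M_{\Omega,b}^{P}$ on $L_{p}({\mathbb{R}^{n}})$ for $s^{\prime}<p$: this is what controls the near part $f_{1}=f\chi_{2kE}$ in the splitting $f=f_{1}+f_{2}$, and it is precisely the running hypothesis ``$M_{\Omega,b}^{P}$ bounded on $L_{p}$'' of the unnamed lemma, furnished by Corollary 0.1 in \cite{Gurbuz2}. Since the whole structure is inherited, the main obstacle is not analytic but bookkeeping: I must confirm that the logarithmic weight in the admissibility condition survives the passage $\alpha\to 0$ unchanged, and that the borderline exponent equality $q=p$ does not break any step --- in particular that the $L_{q}$-norms of $b-b_{E}$ estimated through the John--Nirenberg corollary (\ref{5.1}) and the integral manipulation (\ref{34}) all remain valid at $q=p$. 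Verifying these degenerations is routine, and the corollary then follows.
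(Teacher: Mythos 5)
Your proposal is correct and takes essentially the same route as the paper: the paper obtains Corollary \ref{corollary2} precisely by specializing Theorem \ref{teo10} to $\alpha=0$ (whence $M_{\Omega,b,\alpha}^{P}=M_{\Omega,b}^{P}$, $q=p$, and the admissibility condition degenerates as you describe). Your additional tracing of the unnamed lemma and the scheme of Theorem 4.1 in \cite{GUL-BAL} merely re-verifies what Theorem \ref{teo10} already encapsulates, so no new argument is needed.
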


Now we are ready to return to the proof of Theorem \ref{teo100}.

\subsection{Proof of Theorem \ref{teo100}}

\begin{proof}
Let $1<p<\infty$, $0<\alpha<\frac{n}{p}$ and $\frac{1}{q}=\frac{1}{p}%
-\frac{\alpha}{n}$, $p<q$ and $f\in M_{p,\varphi^{\frac{1}{p}},P}$. As in the
proof of Theorem \ref{teo6}, we represent $f$ in form (\ref{e39}) and have%
\[
\left \Vert \lbrack b,T_{\Omega,\alpha}^{P}]f\right \Vert _{L_{q}\left(
E\right)  }\leq \left \Vert \lbrack b,T_{\Omega,\alpha}^{P}]f_{1}\right \Vert
_{L_{q}\left(  E\right)  }+\left \Vert [b,T_{\Omega,\alpha}^{P}]f_{2}%
\right \Vert _{L_{q}\left(  E\right)  }.
\]

For $x\in E$ we have%
\[
\left \vert \lbrack b,T_{\Omega,\alpha}^{P}]f_{2}\left(  x\right)  \right \vert
\lesssim \int \limits_{\left(  2kE\right)  ^{C}}\frac{\left \vert \Omega \left(
x-y\right)  \right \vert }{\rho \left(  x-y\right)  ^{\gamma-\alpha}}\left \vert
b\left(  y\right)  -b\left(  x\right)  \right \vert \left \vert f\left(
y\right)  \right \vert dy.
\]

Analogously to Section 3.1, for all $p\in \left(  1,\infty \right)  $ and $x\in
E$ we get%
\begin{equation}
\left \vert \lbrack b,T_{\Omega,\alpha}^{P}]f_{2}\left(  x\right)  \right \vert
\lesssim \Vert b\Vert_{\ast}\, \int \limits_{2kr}^{\infty}\left(  1+\ln \frac
{t}{r}\right)  t^{\alpha-\frac{\gamma}{p}-1}\left \Vert f\right \Vert
_{L_{p}\left(  E\left(  x,t\right)  \right)  }dt.\label{75}%
\end{equation}

Then from conditions (\ref{74}), (\ref{100}) and inequality (\ref{75}) we get%
\begin{align}
\left \vert \lbrack b,T_{\Omega,\alpha}^{P}]f\left(  x\right)  \right \vert  &
\lesssim \Vert b\Vert_{\ast}r^{\alpha}M_{\Omega,b}^{P}f\left(  x\right)  +\Vert
b\Vert_{\ast}\int \limits_{2kr}^{\infty}\left(  1+\ln \frac{t}{r}\right)
t^{\alpha-\frac{\gamma}{p}-1}\left \Vert f\right \Vert _{L_{p}\left(  E\left(
x,t\right)  \right)  }dt\nonumber \\
& \leq \Vert b\Vert_{\ast}r^{\alpha}M_{\Omega,b}^{P}f\left(  x\right)  +\Vert
b\Vert_{\ast}\left \Vert f\right \Vert _{M_{p,\varphi^{\frac{1}{p}},P}}%
\int \limits_{2kr}^{\infty}\left(  1+\ln \frac{t}{r}\right)  t^{\alpha}%
\varphi \left(  x,t\right)  ^{\frac{1}{p}}\frac{dt}{t}\nonumber \\
& \lesssim \Vert b\Vert_{\ast}r^{\alpha}M_{\Omega,b}^{P}f\left(  x\right)
+\Vert b\Vert_{\ast}r^{-\frac{\alpha p}{q-p}}\left \Vert f\right \Vert
_{M_{p,\varphi^{\frac{1}{p}},P}}.\label{76}%
\end{align}

Hence choosing $r=\left(  \frac{\left \Vert f\right \Vert _{M_{p,\varphi
^{\frac{1}{p}},P}}}{M_{\Omega,b}^{P}f\left(  x\right)  }\right)  ^{\frac
{q-p}{\alpha q}}$ for every $x\in%
%TCIMACRO{\U{211d} }%
%BeginExpansion
\mathbb{R}
%EndExpansion
^{n}$, we have%
\[
\left \vert \lbrack b,T_{\Omega,\alpha}^{P}]f\left(  x\right)  \right \vert
\lesssim \Vert b\Vert_{\ast}\left(  M_{\Omega,b}^{P}f\left(  x\right)  \right)
^{\frac{p}{q}}\left \Vert f\right \Vert _{M_{p,\varphi^{\frac{1}{p}},P}%
}^{1-\frac{p}{q}}.
\]

Consequently the statement of the theorem follows in view of the boundedness
of the commutator of the parabolic maximal operator with rough kernel
$M_{\Omega,b}^{P}$ in $M_{p,\varphi^{\frac{1}{p}},P}$ provided by Corollary
\ref{corollary2} in virtue of condition (\ref{67}).

Therefore, we have%
\begin{align*}
\left \Vert \lbrack b,T_{\Omega,\alpha}^{P}]f\right \Vert _{M_{q,\varphi
^{\frac{1}{q}},P}}  & =\sup_{x\in%
%TCIMACRO{\U{211d} }%
%BeginExpansion
\mathbb{R}
%EndExpansion
^{n},\text{ }t>0}\varphi \left(  x,t\right)  ^{-\frac{1}{q}}t^{-\frac{\gamma
}{q}}\left \Vert [b,T_{\Omega,\alpha}^{P}]f\right \Vert _{L_{q}\left(  E\left(
x,t\right)  \right)  }\\
& \lesssim \Vert b\Vert_{\ast}\left \Vert f\right \Vert _{M_{p,\varphi^{\frac
{1}{p}},P}}^{1-\frac{p}{q}}\sup_{x\in%
%TCIMACRO{\U{211d} }%
%BeginExpansion
\mathbb{R}
%EndExpansion
^{n},\text{ }t>0}\varphi \left(  x,t\right)  ^{-\frac{1}{q}}t^{-\frac{\gamma
}{q}}\left \Vert M_{\Omega,b}^{P}f\right \Vert _{L_{p}\left(  E\left(
x,t\right)  \right)  }^{\frac{p}{q}}\\
& =\Vert b\Vert_{\ast}\left \Vert f\right \Vert _{M_{p,\varphi^{\frac{1}{p}},P}%
}^{1-\frac{p}{q}}\left(  \sup_{x\in%
%TCIMACRO{\U{211d} }%
%BeginExpansion
\mathbb{R}
%EndExpansion
^{n},\text{ }t>0}\varphi \left(  x,t\right)  ^{-\frac{1}{p}}t^{-\frac{\gamma
}{p}}\left \Vert M_{\Omega,b}^{P}f\right \Vert _{L_{p}\left(  E\left(
x,t\right)  \right)  }\right)  ^{\frac{p}{q}}\\
& =\Vert b\Vert_{\ast}\left \Vert f\right \Vert _{M_{p,\varphi^{\frac{1}{p}},P}%
}^{1-\frac{p}{q}}\left \Vert M_{\Omega,b}^{P}f\right \Vert _{M_{p,\varphi
^{\frac{1}{p}}}}^{\frac{p}{q}}\\
& \lesssim \Vert b\Vert_{\ast}\left \Vert f\right \Vert _{M_{p,\varphi^{\frac
{1}{p}},P}}.
\end{align*}

\end{proof}

\end{document}